%% file: bqsg2025_workshop.tex
\documentclass[11pt]{article}
\usepackage{bqsg2025_workshop}
\usepackage{amsmath, amssymb, amsthm}
\usepackage[T1]{fontenc}
\usepackage{mathtools}
\usepackage{xcolor}
\definecolor{darkblue}{rgb}{0.0, 0.0, 0.8}
\usepackage[colorlinks=true, allcolors=darkblue]{hyperref}

\renewcommand{\thesection}{}
\renewcommand{\thesubsection}{\arabic{subsection}}

\newtheorem{theorem}{Theorem}
\newtheorem{proposition}[theorem]{Proposition}
\newtheorem{question}[theorem]{Question}

\newtheorem{lemma}{Lemma}
\newtheorem{definition}[lemma]{Definition}

\theoremstyle{definition}
\newtheorem{remark}[lemma]{Remark}
\def\d{\mathrm{d}}
\usepackage{url} %

\title{Open problems in billiards and quantitative symplectic geometry}

\author{
\textbf{Bernhard Albach},
\textbf{Jean-Fran\c{c}ois Barraud},
\textbf{Misha Bialy},\\
\textbf{Johanna Bimmermann},
\textbf{Mihai Damian},
\textbf{Umberto Hryniewicz},\\
\textbf{Vincent Humilière},
\textbf{Boris Khesin}, 
\textbf{Agustin Moreno}, 
\textbf{Alexandru Oancea}\\
\textbf{Olga Paris-Romaskevich},
\textbf{Alfonso Sorrentino},  
\textbf{Serge Tabachnikov}
}
  
\date{}

\begin{document}

\maketitle

\footnotetext{Extended author information available on the last page of the article}

\begin{abstract}
This document collects contributions to the Open Problem List in Billiards and Quantitative Symplectic Geometry, compiled following discussions during the workshop ``Billiards and quantitative symplectic geometry'' that took place at the University of Heidelberg on July 14--18, 2025.
\end{abstract}

\textbf{Acknowledgments:}
As organizers of the above-mentioned workshop and editors of this compilation, Ana Chávez Cáliz, Lina Deschamps, and Levin Maier thank all participants for their engagement and the lively discussions and thank, in particular, the contributors of the open problems presented here.\\
This document uses the official style file for submissions to the Workshop on Geometry and Topology in Machine Learning 2025. Thanks are due to Diaaeldin Taha for writing and generously sharing the code used to generate it.\\
A.C.C., L.D. and L.M. acknowledge funding from the Deutsche Forschungsgemeinschaft (DFG, German Research Foundation) – 281869850 (RTG 2229), 390900948 (EXC-2181/1), and 281071066 (TRR 191).

\input{Contributions/Bialy} 
\input{Contributions/Bimmermann} 
\input{Contributions/Umberto} 
\input{Contributions/Khesin} 
\input{Contributions/Moreno} 
\input{Contributions/Oancea} 
\input{Contributions/Paris-Romaskevich} 
\input{Contributions/Sorrentino}
\input{Contributions/Tabachnikov} 

\bibliographystyle{abbrv}
\bibliography{bqsg2025_workshop}

\section{Authors and Affiliations}

Bernhard Albach\\
\texttt{albach@mathga.rwth-aachen.de},\\
Lehrstuhl f\"ur Geometrie und Analysis, RWTH Aachen, Germany. \bigskip 

Jean-Fran\c{c}ois Barraud\\
\texttt{barraud@math.univ-toulouse.fr},\\
Institut de Mathématiques de Toulouse (IMT), Université Paul Sabatier, Toulouse, France. 
\bigskip 

Misha Bialy\\
\texttt{bialy@tauex.tau.ac.il} \\
School of Mathematical Sciences, Tel Aviv University, Israel \bigskip

Johanna Bimmermann\\
\texttt{johanna.bimmermann@maths.ox.ac.uk} \\
Mathematical Institute, University of Oxford, U.K. 
\bigskip 

Mihai Damian\\
\texttt{mihai.damian@math.unistra.fr} \\
Institut de Recherche Mathématique Avancée (IRMA), Université de Strasbourg, France. \bigskip

Umberto Hryniewicz\\
\texttt{hryniewicz@mathga.rwth-aachen.de} \\
Lehrstuhl f\"ur Geometrie und Analysis, RWTH Aachen, Germany. \bigskip

Vincent Humilière\\
\texttt{vincent.humiliere@imj-prg.fr} \\
Institut de Mathématiques de Jussieu-Paris Rive Gauche (IMJ-PRG), Sorbonne Université, Paris, France. 
\bigskip

Boris Khesin\\
\texttt{khesin@math.toronto.edu}\\
University of Toronto, Canada.
\bigskip 

Agustin Moreno\\
\texttt{agustin.moreno2191@gmail.com} \\
University of Heidelberg, Germany.
\bigskip

Alexandru Oancea\\
\texttt{oancea@unistra.fr} \\
Institut de Recherche Mathématique Avancée (IRMA), Université de Strasbourg, France.
\bigskip 

Olga Paris-Romaskevich\\
\texttt{paro@math.univ-lyon1.fr} \\
CNRS, ICJ UMR5208, École Centrale de Lyon, INSA Lyon, Université Claude Bernard \\
Lyon 1, Université Jean Monnet, $69622$ Villeurbanne, France.
\bigskip 

Alfonso Sorrentino\\
\texttt{sorrentino@mat.uniroma2.it} \\
Department of Mathematics, University of Rome Tor Vergata, Italy.
\bigskip

Serge Tabachnikov\\
\texttt{tabachni@math.psu.edu} \\
Pennsylvania State University, USA.

\section{Editors}

Ana Chávez Cáliz\\
\texttt{ana.chavez@im.unam.mx} \\
IMATE Unidad Cuernavaca, Mexico.
\bigskip

Lina Deschamps\\
\texttt{ldeschamps@mathi.uni-heidelberg.de} \\
Universität Heidelberg, Germany.
\bigskip

Levin Maier\\
\texttt{lmaier@mathi.uni-heidelberg.de} \\
Universität Heidelberg, Germany.
 
\end{document}

%% file: Contributions/Bialy.tex
\section*{Misha Bialy}



The notion of total integrability for systems with toric configuration space means the existence of a foliation of the phase space by invariant tori, which are graphs. 
If a region of the phase space is foliated by invariant graphs, we say that the system is totally integrable on this region. The orbits lying on these graphs are action-minimizing. Previous rigidity results \cite{bialy-mironov-survey}
show that total integrability is a very rigid notion.

Here, we want to consider two billiard models and two continuous-time systems.
In each case, the question is whether there are new examples of totally integrable systems.

\subsection*{Magnetic billiards for strong magnetic fields.}

Magnetic billiards were introduced and studied in \cite{berry-robnik}. In this model, the billiard ball moves under the influence of a constant magnetic field along Larmor circles (for a magnetic field of magnitude $B$, the Larmor circles have radius $1/B$). The only known example of an integrable magnetic billiard is a circular billiard. An algebraic approach to this problem was discussed in \cite{magnetic1}, \cite{magnetic2}. Numerical experiments were performed in \cite{albers}. Moreover, the approach of total integrability works well for convex magnetic billiard in a {\it weak} constant magnetic field \cite{bialy-magnetic-hopf}. 

{\bf Question 1.} {\it Is it true that the only totally integrable magnetic billiard in a {\it strong} constant magnetic field is necessarily circular?}

\subsection*{Minkowski billiard.}

The model of Minkowski and, more generally, Finsler billiards, was introduced and studied by Gutkin and Tabachnikov in \cite{GUTKIN2002277}. It plays an important role in a recent study by Artstein-Avidan, Karasev, and Ostrover \cite{Artstein_Avidan_2014} on Mahler's conjecture. Consider a smooth convex curve $\gamma\subset \mathbb R^2$, and a Minkowski (not necessarily symmetric) norm $N$. The Minkowski billiard in $\gamma$ with respect to the norm $N$ is defined as a twist map that corresponds to the generating function \[L(s_0,s_1)=N(\gamma(s_1)-\gamma(s_0))\,,\] and the corresponding variational principle $	\mathcal L\{s_n\}=\sum_{n} L(s_n,s_{n+1}),\ $ where $\gamma$ is parametrized by $N$-arc length parameter (meaning, $N(\dot{\gamma}(s))=1$). Thus, the Birkhoff billiard is a special case of the Minkowski billiard when $N$ is chosen to be a Euclidean norm.

{\bf Question 2.} (Communicated by Yaron Ostrover) {\it Suppose that the Minkowski billiard, corresponding to the norm $N$, in a unit ball of $N$ is totally integrable. Does this imply that the norm $N$ is necessarily Euclidean?}

This question was answered affirmatively in \cite{IMRN} for billiard tables with a finite symmetry group. 
\medskip

\subsection*{Newton equations with $1\frac 12$ degrees of freedom, with periodic potential.}

Newton's equation $\ddot u=-V'_u(u,t)$ is the Euler-Lagrange equation associated with the Lagrangian $L(u,\dot u, t)=\frac 12\dot u^2-V(u,t)$. Assume that $V$ is periodic both in $u$ and in $t$. The corresponding Hamiltonian flow acts on $T^*\mathbb S^1\times\mathbb S^1$. One can prove that this flow is totally integrable on the whole phase space if and only if $V$ does not depend on $u$. 
Moreover, if $V=V(nu+mt)$, with $ m\neq0, n\in\mathbb Z$, then there exists an integral of motion that is quadratic in momentum $p$, and hence the system is totally integrable in a neighborhood of infinity, $|p|>c$, for sufficiently large $c$.

{\bf Question 3.} {\it Are there other potential functions $V$ such that the system is totally integrable at infinity?}
\medskip

\subsection*{An elliptic PDE}

Consider an elliptic PDE  $\Delta u=-V'_u(u, x_1, \ldots ,x_n)$, where $\Delta=\sum_{i=1}^{n}\partial^2_{x_i}$ and $V$ is a 1-periodic function of $u, x_1,\ldots ,x_n$. This equation is the Euler-Lagrange equation for the functional $\int \left( \frac 12|\nabla u|^2-V(u, x_1, \ldots, x_n)\right) dx_1 \ldots dx_n$. Aubry-Mather theory, as well as KAM theory for this equation, were developed by J. Moser \cite{moser} and V. Bangert \cite{bangert}. If $V$ does not depend on $u$, we have the equation $\Delta u=0$. In this case, for any rotation vector $\alpha\in \mathbb R^n$, there exists a family of minimal solutions of the form $u=\alpha\cdot x+c$. Thus, for every $\alpha$, the graphs of these solutions form a foliation of $\mathbb T^{n+1}(u,x_1, \ldots, x_n)$ by minimals (hyperplanes of slope $\alpha$).

{\bf Question 4.} {\it Let $V$ be a 1-periodic potential. Suppose that for every $\alpha \in \mathbb R^n$ there exists a foliation of $\mathbb T^{n+1}$ by graphs of minimal solutions of slope $\alpha$. Is it true that, in this case, $V$ does not depend on $u$?}

For compactly supported potentials $V$, a variational version of this question was approached in \cite{bialy-mackay}.

\bigskip

{\it Acknowledgments.} The research was supported by ISF grant 974/24.

%% file: Contributions/Bimmermann.tex


\section*{Johanna Bimmermann}

The \emph{Hofer--Zehnder capacity} was introduced by Hofer and Zehnder \cite{HZ90} and is a numerical symplectic invariant measuring the size of a symplectic manifold in terms of Hamiltonian dynamics. More precisely, it assigns a number to any symplectic
manifold $(M,\omega)$:
$$
c_{\mathrm{HZ}}(M,\omega)
:= \sup\lbrace \, \max H \,\mid \, H\in C^\infty_0(M;\mathbb{R}_{\ge 0})\ \text{and $X_H$ has no fast periodic orbits} \rbrace.
$$
Here a \emph{fast periodic orbit} is a solution $\gamma:\mathbb{R}/T\mathbb{Z} \longrightarrow M,\ \dot\gamma = X_H(\gamma)$, with $T<1$. In words the Hofer--Zehnder capacity determines how much a Hamiltonian on $(M,\omega)$ can oscillate before fast periodic orbits appear.

Finiteness of $c_{\mathrm{HZ}}(M,\omega)$ implies the Weinstein
conjecture\footnote{i.e.,\ existence of a periodic orbit of the characteristic foliation} on stable
closed hypersurfaces $S\subset M$ (see~\cite[Ch.\ 4, Thm.\ 5]{HZ95}). This indicates that finiteness is a strong statement and is usually difficult to prove. For example, it is unknown whether the Hofer--Zehnder capacity of compact
domains in cotangent bundles $(T^{*}N,d\lambda)$ is finite. Already for $N$ a closed surface of genus $g\ge 2$, this is open.

Fix a hyperbolic metric $g$ on $N$ and denote by $D^{*}_{R}N$ the
radius-$R$ disk cotangent bundle. Then any compact domain
$K\subset T^{*}N$ is contained in $D^{*}_{R}N$ for $R$ large enough, and
scaling the fibers yields a symplectomorphism
$$
(D^{*}_{R}N,d\lambda)\;\cong\;(D^{*}_{1}N,\,R\,d\lambda).
$$
Hence, it is enough to prove finiteness for the unit disk cotangent bundle
$D^{*}N:=D^{*}_{1}N$.

\paragraph{Question:}\begin{center}
    Let $(N,g)$ be an orientable closed hyperbolic surface.\\ Does $c_{\mathrm{HZ}}\bigl(D^{*}N,d\lambda\bigr)<\infty$ hold?
\end{center}

\vspace{1em}

We included the word ``orientable'' for the following reason. If the Euler
characteristic $\chi(N)$ is divisible by $4$ and $N$ is non-orientable,
then there exists a Lagrangian embedding $N\hookrightarrow (\mathbb{C}^{2},\omega_{0})$ \cite{Giv86}. By Weinstein's tubular neighborhood theorem this implies that, for some
$\varepsilon>0$, $(D^{*}_{\varepsilon}N,d\lambda)\hookrightarrow (\mathbb C^{2},\omega_{0})$ embeds symplectically, which implies finiteness.\\

We now present two arguments that might hint finiteness and two arguments that might hint infiniteness. These are not mathematically worked out and are based on discussions with Gabriele Benedetti, Dylan Cant, Stefan Nemirovski and Alexandru Oancea.

\medskip
\noindent\textbf{Finiteness.}

\begin{enumerate}
\item \emph{Capacity relative to the zero section is finite.}

There is a modification of the definition of $c_{\mathrm{HZ}}$ for which the answer to (Q) is yes. Require the Hamiltonians $H\in C_c^\infty(D^{*}N,\mathbb{R}_{\ge 0})$ to attain their maximum along the zero section $N\subset D^{*}N$, i.e.,\ $H|_{N}=const.=\max H$. It is a direct consequence of \cite{Wbr06} that
$$
c_{\mathrm{HZ}}(D^{*}N,N,d\lambda)=\operatorname{sys}<\infty,
$$
where $\operatorname{sys}$ denotes the length of the shortest closed (non-contractible) geodesic.

\item \emph{Displaceability of the zero section after removing a point.}

Displacement energy gives an upper bound for the Hofer--Zehnder capacity, and if the zero section would be displaceable then so is $D^{*}_{\varepsilon}N$ for $\varepsilon$ small. In particular, displaceability of the zero section yields finiteness of $c_{\mathrm{HZ}}(D^{*}N,d\lambda)$. Actually, the full zero-section $N\subset T^{*}N$ of a closed manifold $N$ is never displaceable, but it becomes displaceable if one removes an arbitrarily small disc $D_{\varepsilon}\subset N$. 

To see this, take a Morse function $f:N\to\mathbb{R}$ with all critical points in $D_{\varepsilon/2}$. Then there exists some $\delta>0$ such that $\vert \nabla f\vert \geq \delta$ on $N\setminus D_{\varepsilon}$. Hence, the Hamiltonian flow of $H(q,p)=\frac{1}{2}\vert p\vert^2+f(q)$ displaces $N\setminus D_{\varepsilon}\subset T^{*}N$. We obtain:
$$
c_{HZ}(D^*(N\setminus D_\varepsilon), d\lambda)<\infty,\quad \forall\varepsilon>0.
$$
\end{enumerate}

\medskip
\noindent\textbf{Infiniteness.}

\begin{enumerate}
\item \emph{Lack of pseudoholomorphic curves in $T^{*}N$.}

Most known bounds for the Hofer--Zehnder capacity ultimately come from
pseudoholomorphic curves, as first explained by Hofer--Viterbo \cite{HV92}.
Indeed one needs a form of uniruledness, morally, a
pseudoholomorphic curve through every point. For all closed surfaces $N$
for which finiteness of $c_{\mathrm{HZ}}(D^{*}N,d\lambda)$ is known,
uniruledness is ensured by the existence of a Lagrangian embedding
$N\hookrightarrow \Sigma$ into a ruled complex surface $\Sigma$; for example
$$
S^{2}\subset \mathbb{CP}^{1}\times\mathbb{CP}^{1},\qquad
\mathbb{RP}^{2}\subset \mathbb{CP}^{2},\qquad
T^{2}\subset \mathbb{CP}^{2},\ \ldots
$$
For orientable hyperbolic surfaces, however, no such embedding exists. Indeed, suppose that an orientable surface $N$ of genus $g\ge 2$ was embedded as a Lagrangian into a ruled complex surface $\Sigma$. Then the self-intersection satisfies
$$
[N]^{2} \;=\; -\chi(N) \;=\; 2g-2 \;>\; 0,
$$
since for a Lagrangian surface $L$ in a complex surface one has $[L]^{2}=-\chi(L)$. On the other hand, if $[\omega]$ denotes the Kähler class of $\Sigma$, the Lagrangian condition $\omega|_{N}=0$ implies
$$
[\omega]\cdot \mathrm{PD}[N] \;=\; 0.
$$
Thus $[\omega]$ and $\mathrm{PD}[N]$ are orthogonal classes of positive square in $H^{2}(\Sigma;\mathbb{R})$, which is impossible as the intersection form has exactly one positive eigenvalue \cite[Prop.\ III.18]{Bea96}. Consequently, an orientable hyperbolic surface cannot embed as a Lagrangian into a ruled complex surface.

\item \emph{No upper bounds from symplectic homology.}
The other existing strategies for proving finiteness of the Hofer--Zehnder capacity (or uniruledness) of cotangent bundles rely on symplectic homology. 
In particular, the existence of a \textit{dilation class} $\Delta x = 1$ \cite{BC25}, 
a nontrivial \textit{pair-of-pants product} $x \star y = 1$ \cite{Iri14}, 
or the vanishing of symplectic homology with (differential graded) local coefficients \cite{AFO17,BDHO-cotangent} implies such finiteness. Except for Irie's pair-of-pants product, all these criteria provide upper bounds for the $\pi_1$-sensitive Hofer--Zehnder capacity, which only requires all contractible orbits to be slow. 
It is easily seen that for a hyperbolic surface, $c_{HZ}^0(D^*N, d\lambda)=\infty$. For example, the geodesic flow, generated by the Hamiltonian $E(q,p)=\tfrac{1}{2}\lvert p\rvert_q^2$, has no contractible periodic orbits. Hence, none of these criteria can be used to prove finiteness for hyperbolic surfaces.\\

For cotangent bundles there is a ring isomorphism between symplectic homology and the homology of the free loop space \cite{Abo15}.
For a closed hyperbolic surface $N$, the latter can be computed explicitly since $N$ is a $K(\pi,1)$ \cite[Ch.\ 1.4]{LO15}:
$$
H_*(\mathcal{L}N,\mathbb{Z})
\cong
\bigoplus_{[g]\in \mathrm{Conj}} H_*(K(C_\pi(g),1),\mathbb{Z}),
$$
where $C_\pi(g)$ denotes the centralizer of $g\in\pi=\pi_1(N)$.  
Here $C_\pi(e)=\pi$ and $C_\pi(g)\cong\mathbb{Z}$ for $g\neq e$, giving
$$
H_*(\mathcal{L}N,\mathbb{Z})
\cong
H_*(N,\mathbb{Z})
\oplus
\bigoplus_{[g]\neq[e]} H_*(S^1,\mathbb{Z}).
$$
This simple structure immediately rules out the existence 
of a nontrivial loop product $x\star y = 1$, as $\star: H_i(\mathcal{L}N,\mathbb{Z})\times H_j(\mathcal{L}N,\mathbb{Z})\to H_{i+j-2}(\mathcal{L}N,d\lambda)$, and the unit is the only degree $2$ element.
\end{enumerate}

We conclude with a possible strategy to prove infiniteness by linking
quantitative symplectic geometry with billiards.

\textbf{Lower bounds via hyperbolic billiards.} If $\Omega\subset N$ is open and simply connected, it lifts to an open
subset of the hyperbolic plane $\mathbb{H}^{2}$ with compact closure, let
$\Omega\text{-}\mathrm{sys}$ denote the length of the shortest billiard trajectory in
$\Omega$. Then
$$
\Omega\text{-}\mathrm{sys}\le c_{\mathrm{HZ}}(D^{*}N,d\lambda).
$$
Note that the notion of “billiard trajectory’’ here might be slightly relaxed: the lower bound only requires closed billiard trajectories to have length $\ge \Omega\text{-}\mathrm{sys}$ when they arise as limits of approximate solutions in the sense of approximation
schemes developed in \cite{AM11,Voc21}.

\paragraph{Question:} \begin{center}
    Is there a sequence of embedded billiard tables
$\Omega_i\subset N$, $i\in\mathbb{N}$, such that
$$
\Omega_i\text{-}\mathrm{sys}\longrightarrow \infty \quad \text{as } i\to\infty\ ?
$$
\end{center}
A positive answer would, to the best of the author’s knowledge, give the first example of a disk cotangent bundle with infinite Hofer--Zehnder capacity.



%% file: Contributions/Umberto.tex


\section*{Bernhard Albach, Umberto Hryniewicz, and Alexandru Oancea}

In~\cite{Albach} it has been conjectured that for a Reeb flow on a closed $3$-manifold, either there are finitely many periodic orbits, or the number of periodic orbits grows at least quadratically. To be precise, consider for each $T>0$ the number $0\leq P(T)\leq \infty$ of periodic orbits with period $\leq T$. It is important to stress that we only count primitive periodic orbits, i.e., we do not count multiply covered orbits. 

\noindent \textbf{Conjecture 1.} For a Reeb flow on a closed connected $3$-manifold, either there are exactly two periodic orbits, or $$ \liminf_{T\to+\infty} \frac{\log P(T)}{\log T} \geq 2 $$ holds.

The conjecture above, which is also cited in~\cite{DanCG}, has been confirmed in~\cite{Albach} for geodesic flows of reversible Finsler metrics on $S^2$. This significantly improves the best known growth rate $$ \liminf_{T\to+\infty} P(T)\frac{\log T}{T} > 0 $$ which was due to Hingston~\cite{Hingston1993}. In fact, in this case there is always quadratic growth since the number of closed geodesics is always infinite. On closed connected orientable reversible Finsler surfaces of genus $\geq 1$, Conjecture~1 is long known to be true for the geodesic flow. Hence, there is always quadratic growth of closed geodesics in any reversible Finsler closed surface -- orientable or not. Geodesic flows on $S^2$ can be lifted to Reeb flows on the standard contact $3$-sphere. It should be noted that the results from~\cite{Albach} cover a significantly larger class of such Reeb flows.

One might wonder if the same holds for a general low-dimensional symplectic dynamical system, which means either a symplectic diffeomorphism on a compact symplectic surface (with or without boundary), or the Hamiltonian flow on a $3$-dimensional compact regular energy level without boundary. In~\cite{Albach} it is also proved, via a simple argument based on the transverse foliations by Le Calvez and Tal~\cite{LeCalvez-Tal}, that the same dichotomy as in Conjecture~1 holds for any symplectic diffeomorphism on~$S^2$.

The confirmation of Conjecture~1 is, however, \textbf{not} the problem proposed here. Instead, we would like to propose the problem of investigating whether the same phenomenon holds in any dimension. 

\noindent \textbf{Problem 1.} Is it true that for a Hamiltonian diffeomorphism on a closed symplectic manifold, either the number of periodic orbits is finite, or periodic orbits grow at least quadratically in the sense that $\liminf_{T\to+\infty} \frac{\log P(T)}{\log T} \geq 2$?

Due to the lack of counterexamples, we suspect the answer might be positive. For instance, as test cases, we propose two particular instances of the above.

\noindent \textbf{Problem 2.} Is it true that $\liminf_{T\to+\infty} \frac{\log P(T)}{\log T} \geq 2$ holds for every Hamiltonian diffeomorphism on a standard symplectic torus? Or in an arbitrary aspherical closed symplectic manifold?

\noindent \textbf{Problem 3.} Is it true that for a Hamiltonian diffeomorphism on $\mathbf{C}P^n$ with infinitely many periodic orbits the inequality $\liminf_{T\to+\infty} \frac{\log P(T)}{\log T} \geq 2$ holds?

In low-dimensions, integrable Hamiltonian systems indicate that quadratic growth should be sharp. To justify this, if one would count invariant tori foliated by periodic orbits or isolated closed geodesics on a closed Riemannian $2$-sphere of revolution, instead of only closed geodesics, then it is simple to present examples where the growth is exactly quadratic. One would conjecture that it is possible to perturb such a system to get existence of systems with exact quadratic growth of periodic orbits. The corresponding statement is also expected to be true for integrable annulus maps, as pointed out by Gerhard Knieper. The construction of such a perturbation is not trivial and can potentially be a very difficult task.

The question that we are asking in higher dimensions is to be understood in the context of the question asked by Hofer and Zehnder in~\cite[p.~263]{HZ}, whether every Hamiltonian map on a compact symplectic manifold $(M,\omega)$ possessing more fixed points than necessarily required by the Arnold conjecture possesses always infinitely many periodic orbits. This question came to be known as the Hofer-Zehnder conjecture. 

Shelukhin proved this conjecture in~\cite{Shelukhin} for closed monotone symplectic manifolds with semi-simple even-dimensional quantum homology ring, a class that includes complex projective spaces. His proof provides the estimate $\liminf_{T\to+\infty} P(T) \frac{\log T}{T}>0$, i.e., the growth rate of the number of periodic points is at least the growth rate of the prime numbers, just like in~\cite{Hingston1993}. 

Ginzburg and G\"urel interpreted more broadly the question of Hofer and Zehnder, in the sense that the existence of a fixed point not strictly required by Floer theory forces the existence of infinitely many periodic points, for example the existence of a hyperbolic fixed point~\cite{Ginzburg-Gurel2014}, or the existence of a non-contractible fixed point~\cite{Gurel2013,Ginzburg-Gurel2016}. Notably, the estimate $\liminf_{T\to+\infty} P(T) \frac{\log T}{T}>0$ is already proved by G\"urel in~\cite{Gurel2013}. One could venture to ask whether quadratic growth holds in such cases as well.

%% file: Contributions/Khesin.tex


\section*{Boris Khesin}

\textbf{1. Find physically interesting examples of infinite-dimensional billiards}

\smallskip

{\bf Motivation:} 
It is a well-known folklore (made into a theorem under certain conditions) that billiard and geodesic flows are closely related by taking appropriate limits, see e.g. \cite{Giannetto, Lange}. There are various physical problems leading to the construction of geodesics on infinite-dimensional spaces. One of the sources is the Euler-Arnold equation describing geodesics on infinite-dimensional groups. In particular, solutions of the Euler equation of incompressible hydrodynamics can be regarded as geodesics on the group of volume-preserving diffeomorphisms with respect to the right-invariant $L^2$-metric on the group. A similar framework exists for the KdV equation, magnetohydrodynamics, Heisenberg chain, etc. \cite{Arnold66, ArnKh}
A natural question is whether there exist interesting infinite-dimensional billiards, possibly related to equations of mathematical physics.

One possible approach would be to consider not one or several particles hitting a wall with a billiard reflection, but a continuum of particles doing that. The difficulty is that then the reflected flow will be passing through the same domain as the flow before reflection, and hence there will be two different flows passing simultaneously through each point in the vicinity of the boundary. However, this is exactly the setting handled by multi-phase flows and by the corresponding groupoid structure for generalized flows, see \cite{Brenier, IzKh}. This way infinite-dimensional billiards in fluids might correspond to a choice of appropriate boundary conditions in the diffeomorphism groupoid for two-phase fluids. It would be very interesting to develop the corresponding theory for multi-phase fluids with billiard reflections from the boundary.

\medskip

\textbf{2. Are there nontrivial examples of integrable pensive billiards?}

\smallskip

{\bf Motivation:} In \cite{DGK}, a new discrete dynamical system generalizing classical billiards was introduced. Given a planar domain with a boundary curve $\gamma$ and a delay function $\ell(\alpha)$, the {\it pensive billiard map} is a composition of the classical billiard map with the translation along the boundary by the distance $\ell(\alpha)$, where this translation may depend on the incidence angle $\alpha$; see Figure \ref{figPensive1}. 

\begin{figure}[h!]\centering
  \includegraphics[width=2.5in]{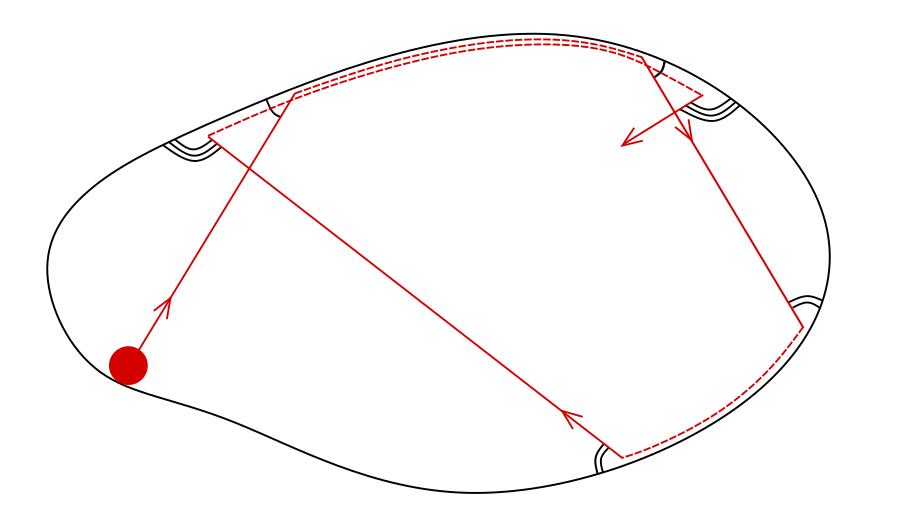} 
  \caption{Pensive Billiard.}
\label{figPensive1}
\end{figure}

One of the systems that admits such a description is a puck (or coin) billiard proposed by M.~Bialy in \cite{Bialy}: geodesics on a surface obtained by gluing two copies of a given domain $D$ to the cylinder $[0,h] \times \partial D$ can be identified with pensive billiard trajectories for an appropriate delay function.
Another example arises in studying 2-dimensional ideal hydrodynamics as a motion of vortex dipoles hitting the boundary of a fluid domain at a certain angle: they split and travel along the boundary in opposite directions until they meet again and merge into a traveling dipole, reflecting from the boundary at the same angle.
 
While the classical billiard, corresponding to the vanishing delay function $\ell(\alpha)\equiv0$, has ellipses as examples of integrable dynamical systems, no interesting integrable cases are known for nontrivial delay. (There are, of course, easy examples  of any delay function $\ell$ on a circle $\gamma$ or the delay $\ell(\alpha)\equiv({\rm perimeter})/2$ on the ellipse.) 
It was recently proved in \cite{Barbieri} that for sufficiently tall puck billiards (i.e., for sufficiently large $h$), there are no caustics, and hence no integrability is possible in that case. 
The question is to find if there exist integrable pensive billiards and present some nontrivial examples by choosing appropriate curves $\gamma=\partial D$ and delays $\ell$ on them.

\medskip

\textbf{3. Is there a fluid of negative density?}

\smallskip

{\bf Motivation:} A mild reformulation of the question above is as follows: {\it Study continuous versions for the interactions of particles of positive and negative mass. Explore the relation with pseudo-Euclidean metrics.} 

To motivate this question, we start with the motion of three point-like beads of masses $m_1, m_2$, and $m_3$
on a frictionless ring, making elastic collisions
with each other. By using the momentum and energy conservation laws and quotienting out the rotational symmetry, the motion of the beads is equivalent to a billiard trajectory in an acute triangle
with angles given by
$$
 \tan \alpha_1=\sqrt{\frac{m_1(m_1+m_2+m_3)}{m_2 m_3}},
 $$
as well as respective cyclic permutations of indices. (Note that if one of the masses, say $m_3$, goes to infinity,
this corresponds to a billiard in a right triangle, $\alpha_3=\pi/2$, which is equivalent to two beads constrained to a segment between two walls.) This equivalence allows one to answer various questions on the existence of periodic orbits, etc.

A natural question: ``Does there exist a mechanical system corresponding to a billiard in an obtuse triangle?'' has a curious answer. This would be a system where one allows beads of {\it negative mass}, as long as they satisfy $(m_1 + m_2 + m_3)m_1 m_2 m_3 > 0$, see \cite{GlashowMittag}. Thus, if the masses have the same sign, then the associated triangle is acute; otherwise, it is obtuse. In this way, the obtuse triangular billiard tables suggest considering the one-dimensional dynamics of particles with positive and negative mass. For instance, in a nice paper \cite{Artigue}, Artigue presents a study of all possible cases of positive and negative masses. One 
introduces a small particle of negative mass, ``graviton'', which via elastic collisions, pulls together other positive masses (and might be the source of gravity in general!), cf.  \cite{Wiki}.

However, there is a different point of view: instead of changing the angles, one can consider the billiard in a fixed triangle on the plane with an appropriately rescaled Euclidean metric. In the case of negative mass(es), the metric becomes pseudo-Euclidean. Billiards in pseudo-Euclidean (or, more generally, in pseudo-Riemannian) 
metrics were described in \cite{KhTab}. The corresponding theory and applications to gravitons are described in \cite{Artigue}. 
The problem is to construct a continuous version of this theory of particles of negative mass. Are there some interesting phenomena related to the corresponding Euler-like equations for the ``fluid of negative density''?

\bigskip

{\it Acknowledgments.} BK thanks the organizers and participants of the workshop on {\it Billiards and Quantitative Symplectic Geometry} at the Heidelberg University for a stimulating atmosphere. The research was partially supported by an NSERC Discovery Grant.

%% file: Contributions/Moreno.tex


\section*{Agustin Moreno}

The set of oriented lines $\mathcal{L}$ in $\mathbb R^{n+1}$ is a symplectic manifold $(\mathcal{L},\omega)$, symplectomorphic to $(T^*S^n,\omega_{std})$. Given a convex billiard table $T\subset \mathbb R^{n+1}$, the induced \emph{billiard domain} is
$$
W_T=\{l \in \mathcal{L}: l \cap T\neq \emptyset\}\subset \mathcal L,
$$
which carries the symplectic form $\omega_T\coloneq\omega\vert_{W_T}$. $(W_T,\omega_T)$ is symplectomorphic (via the identification $\mathcal L \cong T^*S^n$) to a fiber-wise convex domain in $T^*S^n$, and therefore diffeomorphic to $\mathbb D^*S^n$. The associated \emph{billiard map} is
$$
f_T: (W_T,\omega_T)\rightarrow (W_T,\omega_T),
$$
which maps a line intersecting $T$ to its billiard reflection, as determined by $T$. This map preserves the symplectic form $\omega_T$. Moreover, while Hamiltonian (and smooth) in the interior, $f_T$ is only continuous along the boundary $\partial W_T$. We call such a map \emph{$C^0$-Hamiltonian}. The following are open problems:

\begin{enumerate}
    \item Characterize the fiber-wise convex domains in $T^*S^n$ which arise from billiard tables.
    \item Characterize the $C^0$-Hamiltonian maps $f: W \rightarrow W$ of a fiber-wise convex domain $W$ which arise as billiard maps of convex billiard tables.
\end{enumerate}

To the best of the author's knowledge, even the case $n=1$ is open. Moreover, in the latter case, the corresponding billiard maps are classical twist maps. These corresponding billiard maps are also expected to satisfy a version of the twist condition as introduced in \cite{GPB}, or the more relaxed version introduced in \cite{LM25}, although it has not been checked. 

There are connections to celestial mechanics, as it was shown in \cite{MvK} that the dynamics of the (low-energy, near-primary) circular restricted three-body problem, is encoded by a $C^0$-Hamiltonian map of a fiber-wise star-shaped domain in $T^*S^2$. This domain is not necessarily fiber-wise convex, except for a suitable set of parameters known as the \emph{convexity range}.

%% file: Contributions/Oancea.tex


\section*{Jean-Fran\c{c}ois Barraud, Mihai Damian, Vincent Humilière, and Alexandru Oancea}

\textbf{Problem statement} Let $M$ be a closed manifold that is not a $K(\pi,1)$, and endow the cotangent bundle $T^*M$ with its standard symplectic form. We conjecture that, for any closed and connected hypersurface $\Sigma\subset T^*M$ and any Hamiltonian $H:T^*M\to\mathbf{R}$ that admits $\Sigma$ as a regular energy level $\Sigma=H^{-1}(c)$, there exists $\varepsilon>0$ such that almost all energy levels of $H$ in the interval $(c-\varepsilon,c+\varepsilon)$ carry a contractible periodic orbit of $H$.    
 
\textbf{Context and motivation}. 

The study of closed characteristics on hypersurfaces of symplectic manifolds is a fundamental topic in symplectic geometry. When such a closed hypersurface is of contact type, the famous Weinstein conjecture \cite{Weinstein79}, first proved by Viterbo in $\mathbf{R}^{2n}$~\cite{Viterbo-WeinsteinR2n}, claims the existence of at least one closed characteristic.  Without assuming the contact type hypothesis,  given some symplectic manifold $(W,\omega)$ and some proper autonomous smooth Hamiltonian $H:W\to \mathbf{R}$, one can ask which regular levels $H^{-1}(c)$ contain at least one closed characteristic. The question of whether {\it all} the regular levels of any Hamiltonian satisfy this property is known as the Hamiltonian Seifert conjecture. It was disproved already for $W=\mathbf{R}^{2n}$ by Ginzburg~\cite{Ginzburg-Ham-Seifert1} \cite{Ginzburg-Ham-Seifert2} and Herman~\cite{Herman} for $2n\geq6$; G\"urel and  Ginzburg later provided a ${\cal C}^2$ counterexample for $2n=4$ \cite{Ginzburg-Gurel1}, \cite{Ginzburg-Gurel2}. Once the conjecture had been disproved, it is natural to ask whether at least some of the regular levels $H^{-1}(c)$ contain closed characteristics. In the particular case $W=\mathbf{R}^{2n}$ endowed with the standard symplectic form it actually turns out that many of them do. More precisely, a celebrated result of Hofer-Zehnder~\cite{HZ} and Struwe~\cite{Struwe} asserts that almost all (regular) levels of a smooth proper Hamiltonian $H:\mathbf{R}^{2n}\to \mathbf{R}$ carry at least one closed characteristic. This result is known as {\it the almost existence property}.

In a cotangent bundle any closed and connected hypersurface separates $T^*M$ into a bounded component and an unbounded component. Hofer and Viterbo~\cite{HV88} proved that the above property holds for all hypersurfaces that enclose the $0$-section, without contractibility. Albers-Frauenfelder-Oancea~\cite{AFO17} proved the above property under the assumption that the Hurewicz map $\pi_2(M)\to H_2(M)$ does not vanish. Their proof uses the multiplicative structure on symplectic homology twisted by suitable rank 1 local coefficients. Frauenfelder-Pajitnov~\cite{Frauenfelder-Pajitnov} proved this property for rationally inessential manifolds $M$, i.e., manifolds whose fundamental class with rational coefficients vanishes under their classifying map to $B\pi_1$. Their proof involves $S^1$-equivariant homology and localization.  

\textbf{Remarks, partial results} 
In our recent paper~\cite{BDHO-cotangent} we have proved this property for any $M$ that is not a $K(\pi,1)$, but for a somewhat restricted class of hypersurfaces $\Sigma$. Our proof uses a new version of symplectic homology with differential graded local coefficients. 
 
Our hope is that the methods of~\cite{AFO17, Frauenfelder-Pajitnov, BDHO-cotangent}, which are complementary to each other, can be combined in order to prove the above conjecture. 
 
In all cases in which it is known to hold for all closed hypersurfaces, the contractible almost-existence property is a consequence of the finiteness of the so-called \emph{$\pi_1$-sensitive Hofer-Zehnder capacity} for the disc cotangent bundle $D^*Q$. One could also conjecture that this capacity is finite for all manifolds $Q$ that are not $K(\pi,1)$.
 
The assumption of not being $K(\pi,1)$ seems reasonable as a borderline between existence and non-existence of contractible periodic Hamiltonian orbits: the standard class of manifolds that \emph{are} $K(\pi,1)$ is that of manifolds that carry a Riemannian metric of non-positive sectional curvature, and such metrics do not admit contractible closed geodesics. In particular, none of the positive radius sphere bundles for such a Riemannian metric carries any contractible closed characteristic. Thus, in the above conjecture, the contractibility requirement is intimately tied to the non-$K(\pi,1)$ assumption.\\
 
  



  

%% file: Contributions/Paris-Romaskevich.tex


\section*{Olga Paris-Romaskevich}

\subsection*{Tiling billiards}
A tiling billiard is a dynamical system associated with a plane polygonal tiling. Here is the rule: a particle moves in a straight line until it reaches an edge of a polygon; then it crosses into a neighboring polygon with its velocity direction changed according to Snell's law of refraction with refraction coefficient $-1$. Several families of tiling billiards have been studied since $2015$. Here is the full (as far as I know) list of them. For this list, a tiling billiard's name is followed by a description of the underlying tiling, the generic behavior of trajectories, and some bibliography.
\begin{itemize}
    \item [1.] \textbf{Trihexagonal tiling billiard} (a periodic tiling by regular hexagons and triangles) has orbits that are dense in an unbounded periodic open set, as proven in the work of Diana Davis and Pat Hooper \cite{DH18}.
    
    \item [2.] \textbf{Triangle tiling billiards} (a periodic tiling by one triangular tile via central symmetries) have orbits that are typically either periodic or escape linearly to infinity (we call this \textbf{integrable behavior}). There exists a zero-measure set of exceptional triangle tilings (parametrized by the Rauzy gasket\footnote{The Rauzy gasket has now been extensively studied since it appears as the parameter set of exceptional trajectories in various dynamical and topological settings, see, in particular, \cite{AR91, AHS, DDe}. For a definition of the Rauzy gaskets, an attractor of a multidimensional continued fraction algorithm, and an introduction to their dynamical properties, see \cite{AS}.}) that permit trajectories escaping non-linearly\footnote{These trajectories necessarily pass by the circumcenters of all tiles that they cross.}. This has been shown in a series of works \cite{BDFI18, TAF, H_Paro}, see also \cite{highlights_O}. For a quick introduction to the triangle tiling billiards and their properties, I recommend a short animation movie by Ofir David \cite{Ofir}, based on the research results in \cite{TAF}.
    
    \item [3.] \textbf{Cyclic quadrilateral tiling billiards} (a periodic tiling by one cyclic quadrilateral via central symmetries) have integrable behavior, see \cite{H_Paro, TAF}. The set of exceptional tilings (admitting non-linearly escaping trajectories) has been recently classified in \cite{Dynnikovetal} and is parametrized by a fractal set named Novikov's gasket.
    
    \item [4.] \textbf{Double triangle tiling billiards} (a periodic tiling by a pair of triangles $\Delta$ and $\Delta'$ with one common edge $a$, and such that their angles $\alpha$ and $\alpha'$ opposite to this edge are equal) are generalizations of triangle tiling billiards. Partial results on the dynamics have been obtained by Magali Jay in her thesis \cite{Jay_thesis}. Probably, their behavior is integrable, as in the two previous examples.
    
    \item [5.] \textbf{Refractive wind-tree model} (the tiling is defined by rectangles of size $a \times b$ with centers belonging to a lattice $\Lambda$, and the complement of their union (an \guillemotleft{} unbounded polygon \guillemotright{})) has its generic trajectories trapped in bands, see \cite{Jay_thesis}.
    
    \item [6.] \textbf{Generalized tiling billiards} are a generalization of a tiling billiard corresponding to one fixed polygon that does not necessarily tile the plane. A trajectory starting in a copy of $P$ can still be followed by reflecting $P$ by central symmetry with respect to the midpoint of a crossed side, thus producing a local refraction of a trajectory with coefficient $-1$. \textbf{Cyclic pentagonal generalized tiling billiards} show a behavior different from triangle and cyclic quadrilateral tiling billiards. In particular, the orbits may escape in a non-linear way in open sets: they have a principal direction but deviate from it, see \cite{Jay_pentagons} and \cite{Jay_thesis}. The generic behavior is not yet known. 
    
    \item[7.] \textbf{Brick tiling billiards} (a periodic tiling defined by two rows of $1 \times 1$ squares that are shifted relative to one another by some parameter $\theta \in (0,1)$) have all of their orbits periodic when $\theta \in \mathbb{Q}$, as shown in \cite{thesis_park}. The behavior for $\theta \notin \mathbb{Q}$ has not yet been studied.
\end{itemize}

\begin{figure}
    \centering
    \includegraphics[height= 4 cm]{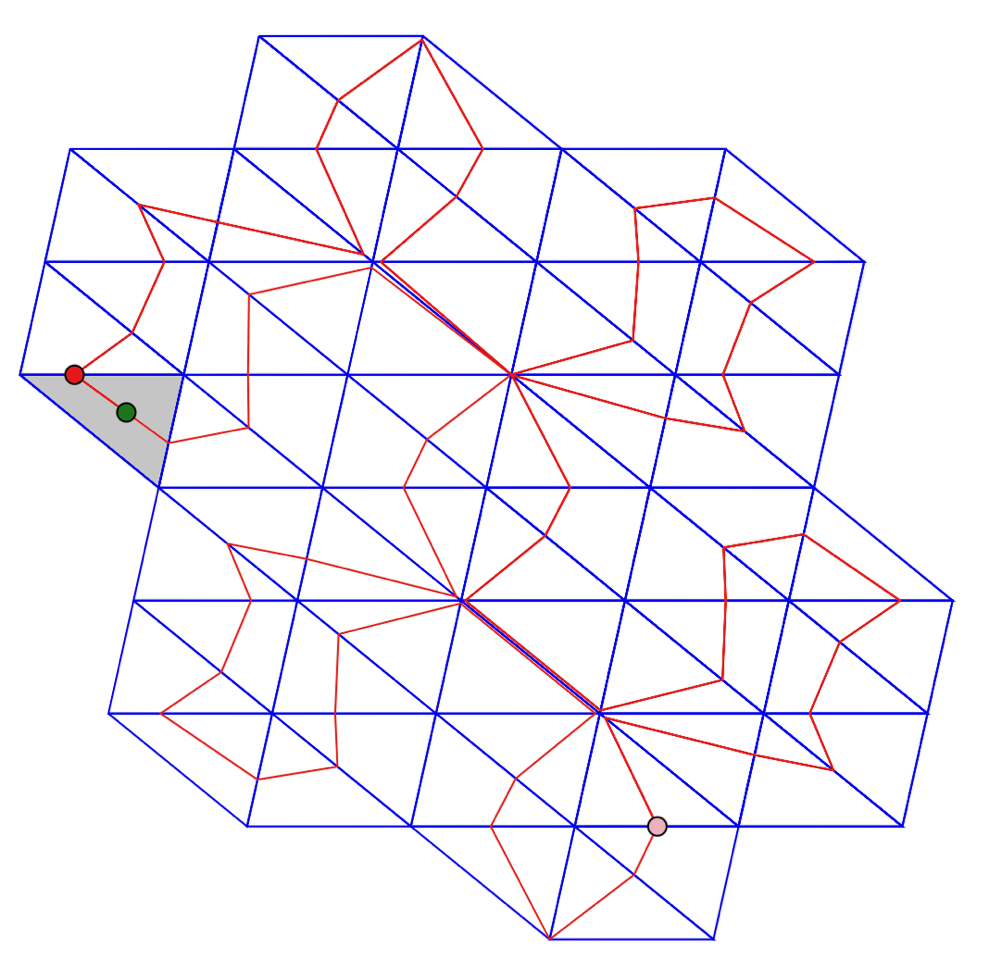}
    \includegraphics[height= 3 cm]{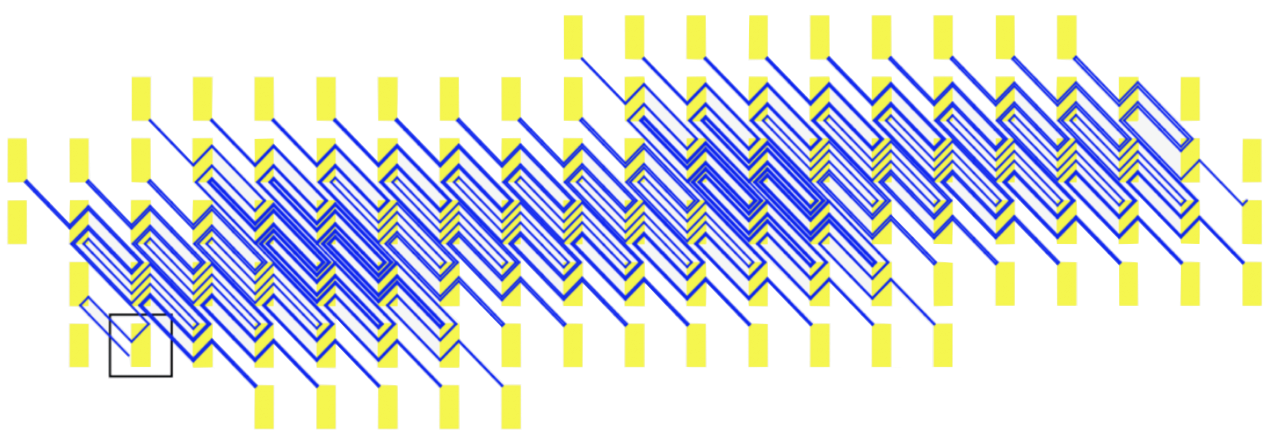}
    \caption{On the left, a periodic trajectory of a triangle tiling billiard. On the right, a trajectory of a refractive wind-tree model confined to a band.}
    \label{fig:examples}
\end{figure}

A tiling of the plane by polygons is \textbf{locally foldable} if it is $2$-colorable and the \textbf{defect of a vertex} (the alternating sum of angles in that vertex) is equal to $0$. This property implies (see \cite{TAF}) strong restrictions on the dynamics of the corresponding billiard: all trajectories are either periodic or escaping, and any trajectory passes through each tile at most once. Moreover, the study of the dynamics of tiling billiards in locally foldable tilings can be related to Novikov's problem in topology. See \cite{Dynnikov_main} for an introduction to Novikov's problem and \cite{PaRoH, Dynnikovetal} for its link to tiling billiards. This is the case for examples $2., 3.$, and $4.$ above.

It is worth mentioning that all of the work on tiling billiards mentioned above was accomplished by reducing their dynamics to that of linear flows on certain families of translation or half-translation surfaces, although the construction method is ad hoc for each family.

\subsection*{Two questions}
In the following, I ask two questions, related in spirit -- one general and one specific.

\begin{question} In the class of tiling billiards in periodic tilings, is there a finite list of generic archetypal behaviors? For instance, the tiling billiards for which generic behavior is understood fall into one of two categories: either their typical trajectories are dense in open sets of the plane, as in cases 1. and 5., or the behavior is integrable (typical trajectories are either periodic or linearly escaping), as in cases 2. and 3. (and conjecturally in case 4. as well). One could ask whether such a dichotomy holds in general.
\end{question}

Locally foldable periodic tiling billiards, up to some technical regularity conditions, are integrable. This is not at all obvious but follows from fine Morse theory developed by Ivan Dynnikov, see in particular Theorem $1$ in \cite{Dynnikov_main}, and \cite{PaRoH}. The first ingredient in the proof comes from \cite{BDFI18} and was further developed in \cite{TAF, PaRoH}. It states that for any locally foldable tiling there exists a \textbf{global folding map} $\mathcal{F}$ defined on a tiled plane such that, on the restriction of two neighboring tiles, it is a folding of one of them onto the other along the common edge (which is an axial symmetry with respect to this edge). 

It seems that the tilings on which one could start testing Question $1$ should be the simplest not-locally foldable tilings - for example, irrational brick tiling billiards or the parallelogram tiling billiards that we discuss in Question $2$.

\begin{question}
Describe the dynamics of (at least, generic) trajectories of \textbf{parallelogram tiling billiards}, defined in a tiling of the plane by parallelograms with sides parallel to the vectors $\Vec{\boldsymbol{a}}:=(a,0)$ and $\Vec{\boldsymbol{b}}:=(b \sin \alpha, b \cos \alpha)$, where $a,b \in \mathbb{R}^+_*$ and $\alpha \in (0, \pi)$. The tiling billiard dynamics depends on two parameters, $\alpha$ and $m=\frac{a}{b}$.
\end{question}

\begin{figure}
    \centering
    \includegraphics[height= 4cm]{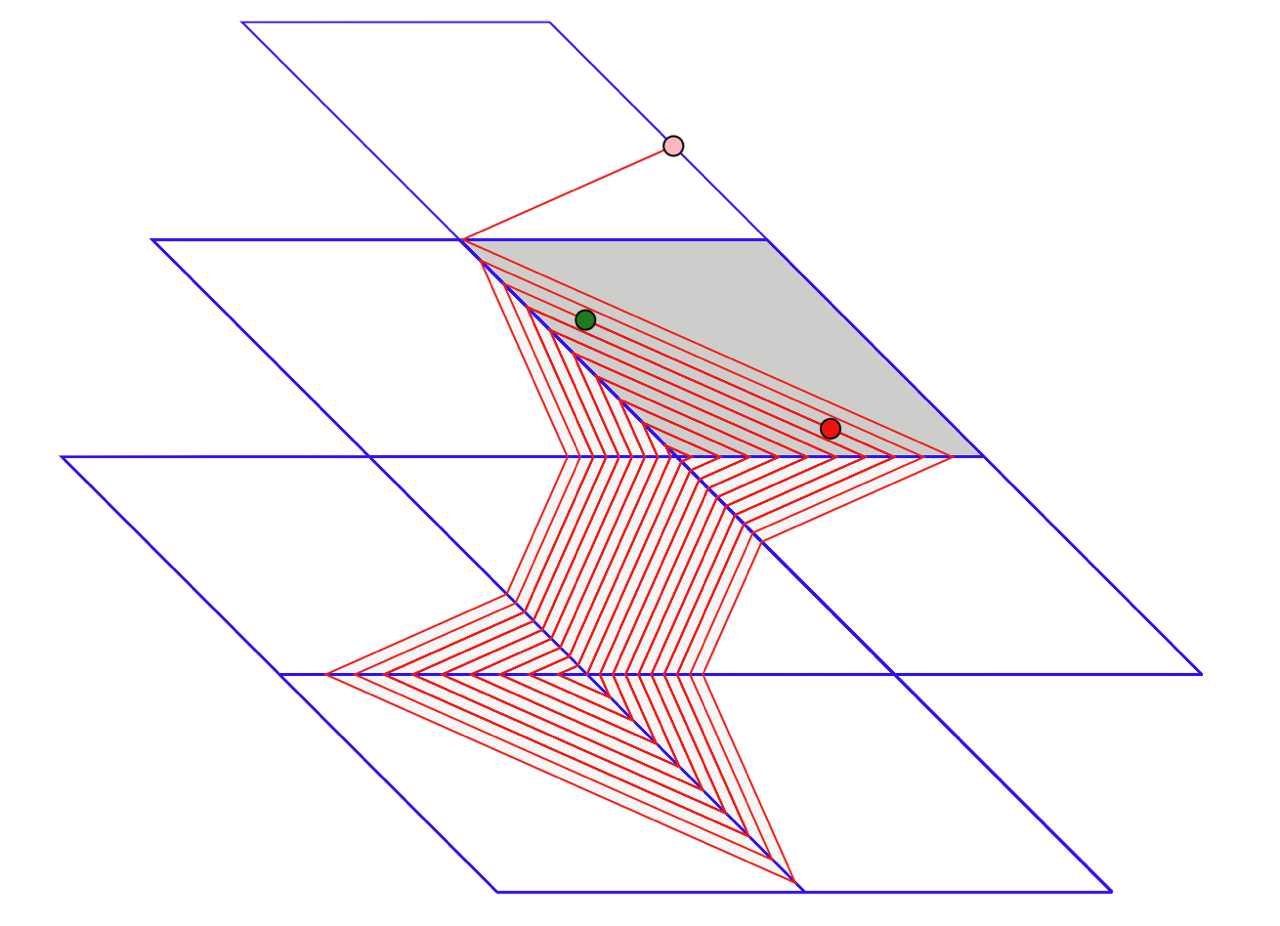}
    \includegraphics[height= 4.5cm]{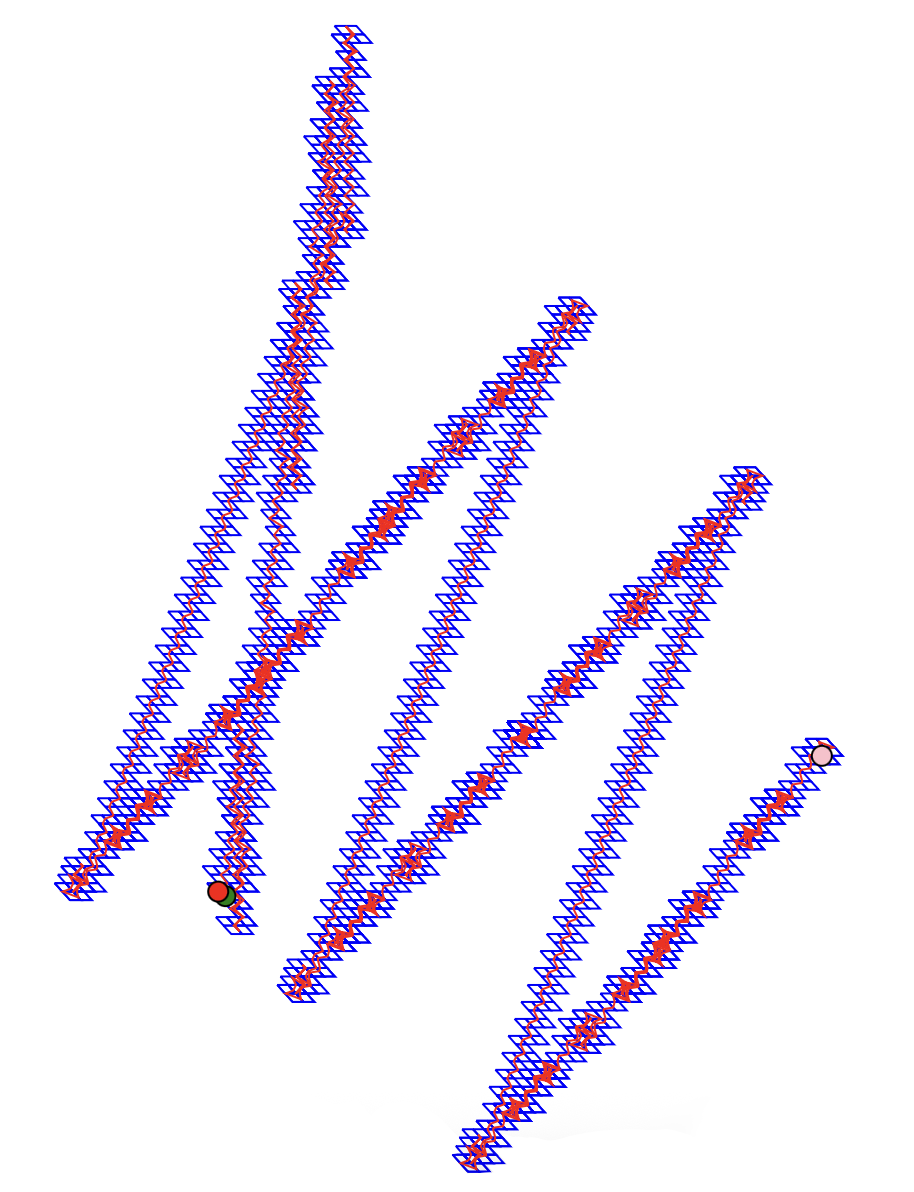}
    \caption{On the left, a trajectory in a parallelogram tiling with parameters $\alpha=\frac{\pi}{4}$ and $a=b=1$ (a trajectory is a leaf of a non-orientable foliation of the plane) after $138$ refractions. On the right, a trajectory with $a=1, b=1.05189074$ and $\alpha=47.8695014$ after $3000$ refractions (a trajectory re-intersects itself).}
    \label{fig:parallelogram}
\end{figure}

The defect of a parallelogram tiling is $4 \alpha - 2 \pi$, so it is never foldable unless the tile is a square. However, the universal cover $\widetilde{X}$ of the plane deprived of all vertices of the tiling can be \guillemotleft{} foldable \guillemotright{} under some conditions on $\alpha$ (obviously, $\alpha = \frac{p}{q} \frac{\pi}{2}$, with $p, q \in \mathbb{N}$ and $(p, q) = 1$, is necessary). More precisely, fix a tile $P_0$. One then defines a \textbf{folding map}
$
    \mathrm{fold} : 
    F(\widetilde{X}) \rightarrow \mathrm{Isom} \; \mathrm{Aff} (\mathbb{R}^2)
$
that associates to each face $f \in  F(\widetilde{X}) $ an isometry of the plane (possibly changing orientation), obtained by folding \guillemotleft{} in an accordion \guillemotright{} along the edges of a path of parallelograms starting in $P_0$, defined by a sequence of crossed edges $e_1, \ldots, e_n$. Denote $P_{(e_n, \ldots, e_1)}$ the last face of this path. Then $\mathrm{fold} (P_{(e_n, \ldots, e_1)}) := s_{e_1} \circ \ldots \circ s_{e_n}$, where $s_e$ is the axial symmetry along the edge $e$. One also defines the map $\mathrm{fold_L} : F(\Tilde{X}) \rightarrow O_2$ by reducing the image of $\mathrm{fold}$ to its orthogonal part.

The goal is then to determine the kernel of $\mathrm{fold}_L$ as well as the image $\mathrm{fold} \left(\mathrm{ker} (\mathrm{fold}_L)\right)$, which consists of translations.
Moreover,

\begin{itemize}
    \item [C1.] under the condition that $ \mathrm{im} \left(\mathrm{fold}\right) $ is finite, the covering $Y:= \widetilde{X} / \mathrm{ker} (\mathrm{fold}_L)$ will be a finite covering (\textbf{Condition 1});
    \item[C2.] and under the condition that $\mathrm{fold} \left(\mathrm{ker} ( \mathrm{fold}_L)\right) =\mathbb{Z} \Vec{u} \oplus \mathbb{Z} \Vec{v} \subset \mathbb{R}^2$, with $\Vec{u}$ and $\Vec{v}$ not collinear (\textbf{Condition 2}),
\end{itemize}    
one obtains a well-defined folding into a torus: $Y \rightarrow \mathbb{R}^2 / \mathbb{Z} \Vec{u} \oplus  \mathbb{Z} \Vec{v}.$ If Conditions C1 and C2 hold, we say that the universal cover $\widetilde{X}$ is \textbf{foldable}.

We omit here rather lengthy calculations of $\mathrm{ker} (\mathrm{fold}_L)$ as well of its image under $\mathrm{fold}$ that we need to be a lattice that we have done, in order to prove the following 

\begin{proposition}\label{Prop_angles_for_cover}[{Théo Marty, Olga P.-R.}]
The universal cover $\widetilde{X}$ of a parallelogram-tiled plane deprived of the vertices of the parallelograms with angles $\alpha$ is well defined if and only if the following two conditions hold:
\begin{itemize}
\item[1.] $\alpha$ is one of the following angles:
$\frac{\pi}{12}, \frac{5 \pi}{12}, \frac{\pi}{6}, \frac{\pi}{3}, \frac{\pi}{8}, \frac{3\pi}{8}, \frac{\pi}{4}$;
\item[2.] the set $\mathbb{Q} \Vec{\boldsymbol{a}} \oplus \mathbb{Q} \Vec{\boldsymbol{b}}$ is preserved by the rotation of angle $4 \alpha$.
\end{itemize}
\end{proposition}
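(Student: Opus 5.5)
The plan is to compute both ingredients of foldability, $\mathrm{im}(\mathrm{fold}_L)$ and the translation group $\mathrm{fold}(\ker \mathrm{fold}_L)$, in terms of the combinatorics of $\widetilde X$. I read ``well defined'' in the statement as ``foldable'', i.e.\ Conditions C1 and C2 hold.

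\textbf{Step 1: the linear part.} Every edge of the tiling is parallel to $\vec{\boldsymbol a}$ or to $\vec{\boldsymbol b}$. So $\mathrm{fold}_L$ takes values in the group $D$ generated by the two linear reflections $\sigma_a,\sigma_b$ whose axes meet at angle $\alpha$. This is a dihedral group containing the rotation $\rho=\sigma_a\sigma_b$ by $2\alpha$.
\begin{itemize}
\item \emph{Finiteness.} $D$ is finite iff $\alpha\in\pi\mathbb Q$, which is the necessary condition already noted. Under it, $Y=\widetilde X/\ker(\mathrm{fold}_L)$ has $|D|$ sheets, so C1 holds.
\item \emph{Loops.} Since $\widetilde X$ is simply connected, a face is encoded by a homotopy class of edge-paths in the punctured plane. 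Going once around a vertex crosses four edges alternately parallel to $\vec{\boldsymbol a}$ and $\vec{\boldsymbol b}$. Its linear holonomy is rotation by the defect $4\alpha-2\pi$, i.e.\ by $4\alpha$ modulo $2\pi$.
\end{itemize}
Hence $\ker(\mathrm{fold}_L)$ is generated by paths that close up in the plane with trivial linear part. There are two kinds:
\begin{itemize}
\item (a) powers of vertex loops of order equal to the order $q$ of the rotation $R$ by $4\alpha$;
\item (b) conjugates of lattice translations by $2\vec{\boldsymbol a}$ or $2\vec{\boldsymbol b}$ (crossing two parallel edges), combined with vertex loops to kill the linear part.
\end{itemize}

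\textbf{Step 2: the translation group.} Compute the translation parts of these generators. A conjugate $g\,t_v\,g^{-1}$ with $g$ of linear part $\ell\in D$ gives $\ell(v)$. So the translation group $\Lambda=\mathrm{fold}(\ker \mathrm{fold}_L)$ is the $\mathbb Z$-module generated by the $D$-orbits, and in particular the $\langle R\rangle$-orbits, of finitely many vectors. These are integer combinations of $\vec{\boldsymbol a},\vec{\boldsymbol b}$, together with the translation parts of $R^q$-loops, which are again integer combinations of $\vec{\boldsymbol a},\vec{\boldsymbol b}$ and their rotates.

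The main bookkeeping task is to show that modulo rotation everything reduces to the module
\[
M=\sum_k R^k(\mathbb Z\vec{\boldsymbol a}+\mathbb Z\vec{\boldsymbol b}),
\]
up to finite index. The rotations by $2\alpha$ that are not powers of $R$ only enter through the reflections. For this I would expand $\mathrm{fold}$ of a vertex loop explicitly as $s_{e_1}s_{e_2}s_{e_3}s_{e_4}$ for a parallelogram with vertices $0,\vec{\boldsymbol a},\vec{\boldsymbol a}+\vec{\boldsymbol b},\vec{\boldsymbol b}$.

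\textbf{Step 3: crystallographic restriction and the rational condition.} C2 requires $\Lambda$ to be a lattice of rank $2$, i.e.\ discrete, and $\Lambda$ is invariant under $R$. By the crystallographic restriction, $R$ must have order $2,3,4$ or $6$. The order-$1$ case $4\alpha\equiv 0$ would force $\alpha=\pi/2$; I would handle it separately by showing the translation parts then degenerate, although this case may need a separate look. Solving $4\alpha\in\{\pi,\pm 2\pi/3,\pm\pi/2,\pm\pi/3\}$ modulo $2\pi$ with $\alpha\in(0,\pi)$ gives exactly the list in condition 1, up to the symmetry $\alpha\leftrightarrow\pi-\alpha$, which describes the same tiling.

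Conversely, for such $\alpha$, the module $M$ is finitely generated and $R$-invariant. It is discrete iff it has rank $2$ over $\mathbb Z$, i.e.\ iff $\mathbb Q\vec{\boldsymbol a}+\mathbb Q\vec{\boldsymbol b}$ already contains $R\vec{\boldsymbol a}$ and $R\vec{\boldsymbol b}$. This is condition 2. Conversely, under condition 2, $M$ lies in a rank-$2$ $\mathbb Q$-space, so it is a finitely generated discrete subgroup spanning the plane, hence a lattice.

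\textbf{Main obstacle.} I expect the hardest part to be Step 2: proving that the generators of $\ker(\mathrm{fold}_L)$ listed in Step 1 suffice and that their translation parts generate exactly $M$ up to finite index. This needs a presentation of $\pi_1$ of the punctured plane relative to the face graph. I would obtain it from the standard generating set (one loop per vertex plus lattice translations), pushed through the homomorphism $\pi_1\to\mathrm{Isom}\,\mathrm{Aff}(\mathbb R^2)$.
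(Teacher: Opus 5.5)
Your route (linear holonomy, crystallographic restriction for $R$, then a rationality condition) is the one the paper indicates. The paper, however, omits the computation. The step you defer in Step 2 is exactly where the content lies, and the reduction you propose there is false.

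\textbf{How $\Lambda$ is actually generated.} $\pi_1$ of the punctured plane is generated by the conjugates $p\,\ell_v\,p^{-1}$ of the elementary vertex loops alone. There are no extra ``lattice translation'' generators: a path crossing two parallel edges is not a loop, and two reflections in parallel lines translate perpendicularly to them, not by $2\vec{\boldsymbol a}$.
\begin{itemize}
\item If $p$ ends at the face $Q$, the holonomy of $p\,\ell_v\,p^{-1}$ is the rotation by $\pm 4\alpha$ about the point $\mathrm{fold}(Q)(v)$.
\item Let $r_c$ denote the rotation by $4\alpha$ about $c$. Then $r_c r_{c'}^{-1}$ is the translation by $(I-R)(c-c')$. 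Hence $\Lambda=(I-R)\,\mathbb Z[R]\langle c-c'\rangle$, with $c,c'$ ranging over all centres $\mathrm{fold}(Q)(v)$.
\item The linear parts of the maps $\mathrm{fold}(Q)$ run over all of $D$, reflections included. They do not disappear ``modulo rotation''.
\end{itemize}
Also, $Y\to X$ has $\mathrm{ord}(R)$ sheets, not $|D|$, since loop holonomies have linear part in $\langle R\rangle$.

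\textbf{A centre outside $M$.} Take $Q=P_0+\vec{\boldsymbol b}$ and $v=2\vec{\boldsymbol b}$. Then $\mathrm{fold}(Q)$ is the reflection in $\vec{\boldsymbol b}+\mathbb R\vec{\boldsymbol a}$, and the centre is $\vec{\boldsymbol b}+\sigma_a\vec{\boldsymbol b}=\lambda\vec{\boldsymbol a}$ with $\lambda=2\langle\vec{\boldsymbol a},\vec{\boldsymbol b}\rangle/|\vec{\boldsymbol a}|^2$. Together with the centres $0$ and $\vec{\boldsymbol a}$ coming from $P_0$, this puts $(I-R)\vec{\boldsymbol a}$ and $\lambda(I-R)\vec{\boldsymbol a}$ in $\Lambda$. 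So, for $R\neq I$, discreteness forces $\lambda\in\mathbb Q$, and symmetrically $2\langle\vec{\boldsymbol a},\vec{\boldsymbol b}\rangle/|\vec{\boldsymbol b}|^2\in\mathbb Q$. In other words, all of $D$, not only $R$, must preserve $\mathbb Q\vec{\boldsymbol a}+\mathbb Q\vec{\boldsymbol b}$.

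\textbf{Consequence for the statement.} Take $\alpha=\pi/4$ and $a=b$. Condition~2 holds trivially since $R=-I$. Yet $\Lambda$ contains the translations by $2\vec{\boldsymbol a}$ and $2\sqrt2\,\vec{\boldsymbol a}$, obtained as products of the point reflections about $0$, $\vec{\boldsymbol a}$ and $\sqrt2\,\vec{\boldsymbol a}$. So $\Lambda$ is not discrete. Your sufficiency argument therefore cannot be completed. With the definitions as given, the ``if'' direction of the statement seems to fail at $\alpha=\pi/4$ unless $b/a\in\sqrt2\,\mathbb Q$.

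\textbf{The fix.} Replace $M$ by $\mathbb Z[D](\mathbb Z\vec{\boldsymbol a}+\mathbb Z\vec{\boldsymbol b})$. This module contains every centre, because each $\mathrm{fold}(Q)$ is a product of reflections in lines through tiling vertices. With this change your Step 3 goes through and yields the criterion ``$D$ preserves $\mathbb Q\vec{\boldsymbol a}+\mathbb Q\vec{\boldsymbol b}$''. Compared with the stated condition~2:
\begin{itemize}
\item for $\alpha\in\{\pi/6,\pi/3\}$ the two are equivalent: taking $\vec{\boldsymbol a}=1$, both say the span is $\mathbb Q(\sqrt{-3})$;
\item for $\pi/12,5\pi/12,\pi/8,3\pi/8$ neither can ever hold;
\item so the discrepancy occurs only at $\pi/4$.
\end{itemize}

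\textbf{What does hold.} Your necessity half is fine once rephrased. It only needs the centres at the vertices of $P_0$ (and $R$-invariance), which give $\Lambda\supseteq(I-R)M$. For $\alpha=\pi/2$ all holonomies are trivial, so $\Lambda=0$.
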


Note that $\alpha=\frac{\pi}{4}$ is the simplest case: the second condition holds for all $a$ and $b$. In this case, the tiling billiard trajectories form non-orientable parallel foliations on the tiled plane.

I think that the foldability of the cover might make the study of such tiling billiards easier and reduce it to the study of dynamics on translation surfaces. One could then imagine perturbing these angles slightly to see how the dynamics would change. One could even (largely) speculate that there might be some KAM-theory effects, with the parallelograms appearing in Proposition \ref{Prop_angles_for_cover} playing the role of analogues of the KAM Diophantine tori.

\subsection*{Acknowledgments}
Proposition \ref{Prop_angles_for_cover} was proven in collaboration with Théo Marty, and the description of Question $2$ is partially based on his notes. For those interested in tiling billiards, the program NegSnell \cite{program}, developed by Alexander StLaurent and Patrick Hooper, simulates multiple tiling billiard trajectories and can be a good place to start to gain some intuition, as well as to illustrate this contribution.

%% file: Contributions/Sorrentino.tex


\section*{Alfonso Sorrentino}

The mathematical framework of symplectic geometry provides a powerful language for describing conservative mechanical systems. The real world, however, is rarely frictionless. To model dissipative phenomena, the concept of \emph{conformally symplectic dynamics} offers an elegant generalization.

On a symplectic manifold, a flow or map is called {\it conformally symplectic} if it transforms the symplectic form by a constant multiplicative factor. This property suitably captures systems with friction or dissipation that is globally proportional to the momentum by a constant factor, and many interesting physical examples can be described and investigated within this framework. We refer to some recent works in this direction~\cite{AllaisArnaud, ArnaudFejoz, CCD, CCD2, MS} and references therein.

One can naturally consider a more general geometric setting. A \emph{locally conformal symplectic structure} on a manifold is a genuine generalization of a symplectic structure. Locally (on each chart) a conformal symplectic manifold is equivalent to a symplectic manifold, but the local symplectic structure is only well-defined up to scaling by a constant. The monodromy of this local structure around curves can induce non-trivial rescalings. This notion was first introduced by Vaisman~\cite{Vaisman, Vaisman2} and also appeared in the work of Guedira and Lichnerowicz~\cite{GL}. It was later studied by Banyaga~\cite{Banyaga}, and has attracted growing interest in recent years (see, for example,~\cite{CM}).

This more general setting could potentially be used to model a wider class of non-conservative systems with non-uniform or spatially dependent dissipation. However, the list of compelling real-world examples of systems that genuinely fit the framework remains surprisingly empty.

\noindent
\textbf{Problem:} \textit{Provide examples of real-world systems (from physics, mechanics, biology, economics, etc\ldots) that can be modeled as locally conformal symplectic flows or maps.}

In the following, we provide some material that could be useful to address this question. We begin by recalling the formal setting, starting from the well-understood {\it global} conformally symplectic case, namely when the ambient manifold is a symplectic manifold. We then introduce the {\it local} conformally symplectic case and provide some remarks and computations that may be useful for identifying and constructing the sought-after examples.

\subsection{Conformal symplectic dynamics on a symplectic manifold}\label{sec1}

\subsubsection{Setting}
Let $M$ be a finite-dimensional compact and connected smooth manifold, equipped with a smooth Riemannian metric $g$.
We denote by $\omega=-\d\lambda$ the canonical symplectic form on $T^*M$, where $\lambda$ is the Liouville (or tautological) form. Choosing local coordinates $(x_1,\ldots, x_n, p_1,\ldots , p_n)$ on $T^*M$, one has that $\omega= \d x\wedge \d p := \sum_{i=1}^n \d x_i\wedge \d p_i$ and $\lambda=p\d x := \sum_{i=1}^n p_i\d x_i$.\\

\begin{definition}
A smooth vector field $X$ on $T^*M$ is said to be {\it conformally symplectic} (CS) if there exists $c \in {\mathbb R}\setminus\{0\}$ such that $${\mathcal L}_X \omega = c \,\omega$$ where ${\mathcal L}_X$ denotes the Lie derivative in the direction of $X$. Clearly, the symplectic case corresponds to the limit case $c =0$. \\
\end{definition}

\begin{remark}
{\bf (i)} Condition ${\mathcal L}_X \omega = c \,\omega$ can be equivalently rewritten (using Cartan's formula) as:
$$
\d (\iota_X \omega) = c \,\omega.
$$
{\bf (ii)} Observe that if $X$ is conformally symplectic, also $-X$ is conformally symplectic and ${\mathcal L}_{-X} \omega = -c \,\omega$.
Hence, up to a time-inversion of the flow, one can always choose the sign of $c$. In particular, the case $c<0$ corresponds to the {\it dissipative} case.\\
Hereafter, we shall consider the case in which
\begin{equation}\label{hypconfsymp}
{\mathcal L}_{X} \omega = -c \,\omega \qquad \mbox{for some}\; c>0.
\end{equation}
\end{remark}

\medskip

\subsubsection{Properties of the flow}
Let us start by studying the properties of conformally symplectic vector fields and by deriving the differential equations that govern the motion induced by $X$. Using Cartan's formula and the closedness of $\omega$, one obtains, denoting $\iota_X$ the inner product, or contraction, with $X$,
$$
{\mathcal L}_X \omega = \d(\iota_X \omega) + \iota_X(\d\omega) = \d(\iota_X \omega).
$$
Hence, the conformally symplectic condition and the exactness of $\omega=-\d\lambda$ imply
$$\d(\iota_X\omega - c \lambda) =0$$
that is, the 1-form $\iota_X\omega - c \lambda$ is closed. We define the {\it cohomology class} of $X$ to be the cohomology class of this 1-form; it will be denoted by $[X] \in H^1(M;{\mathbb R})$. Observe that here (as well as in the following) we are tacitly identifying the de Rham cohomology groups $H^1(M;{\mathbb R})$ and $H^1(T^*M;{\mathbb R})$ by means of the isomorphisms induced by the projection map $\pi:T^*M\longrightarrow M$, which is a homotopy equivalence, and by its homotopy inverse $\iota: M\longrightarrow T^*M$ given by the inclusion of the zero section.\\

\noindent We say that $X$ is {\it exact conformally symplectic} if $[X]=0$. In this case, there exists a smooth function $H: T^*M \longrightarrow {\mathbb R}$ such that
\begin{equation}\label{eq1}
\iota_X\omega - c \lambda = \d H.
\end{equation}
We call this function an {\it Hamiltonian} associated to $X$. Observe, in fact, that in local coordinates $(x_1,\ldots, x_n,p_1,\ldots, p_n)$, relation (\ref{eq1}) becomes:
$$
-\dot{p} \d x + \dot{x} \d p - c p \d x = \frac{\partial H}{\partial x}(x,p) \d x + \frac{\partial H}{\partial p}(x,p) \d p,
$$
and therefore the vector field is given (in local coordinates) by:
\begin{equation}\label{eqmotion}
\left\{
\begin{array}{l}
\dot{x} = \frac{\partial H}{\partial p}(x,p)\\
\dot{p} = - \frac{\partial H}{\partial x}(x,p) - c p.
\end{array}
\right.
\end{equation}

These equations represent a mechanical system that is subject to a friction, which is constantly proportional to its momentum/velocity ({\it i.e.}, the term $-c p$).\\

\begin{remark}
The non-exact case ({\it i.e.}, $[X]= \sigma \neq 0$) can be transformed into an exact one, modulo a suitable symplectic change of coordinates (which is of course non-exact); see \cite[Remark 2]{MS}.
\end{remark}

\bigskip

\subsection{Locally conformal symplectic systems}

\noindent Our wish would be to consider systems in which the dissipation factor is not necessarily constant. Of course, observe that
if we consider a vector field $X$ on $T^*M$ such that
\begin{equation}
{\mathcal L}_X\omega = f \,\omega
\end{equation}
where $f: T^*M\longrightarrow {\mathbb R}$ is a smooth function, then the evolution of the symplectic form by the flow would not be symplectic anymore (it stops being closed, unless $f$ is constant!).
In the constant case, in fact, the symplectic form evolves in time like $e^{-\lambda t} \omega$, which remains closed and non-degenerate for every $t\in {\mathbb R}$. This fails when we replace $\lambda$ by a function.

Hence, one needs to consider a different structure that is preserved by this kind of flow.\\

\subsubsection{Setting}\label{sec2}

Let $N$ be a finite-dimensional compact and connected smooth manifold, equipped with a smooth Riemannian metric $g$.
Let $\Omega^k(N)$ be the set of differential $k$-forms on $N$ and let $\d$ denote the associated exterior differential.\\

\begin{definition}
Let $\eta$ be a closed 1-form on $N$. We define a new exterior differential
\begin{align}
  \d_\eta :\: \Omega^k(N)&\rightarrow\Omega^{k+1}(N) \nonumber \\
           \alpha &\mapsto \d\alpha -\eta\wedge\alpha.
\end{align}
\end{definition}

\noindent Let us check that it is indeed an exterior derivative (sometimes called {\it Lee differential} or {\it Lichnerowicz differential}).\\

\begin{lemma}\label{lemma1}
$$ \d_\eta \circ \d_\eta =0 \qquad \Longleftrightarrow \qquad \d\eta=0.$$
\end{lemma}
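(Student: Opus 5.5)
I will compute $\d_\eta\circ\d_\eta$ directly on an arbitrary $k$-form $\alpha$ and reduce it to a single wedge product with $\d\eta$. Expanding the definition,
\begin{align*}
\d_\eta(\d_\eta\alpha) &= \d(\d\alpha-\eta\wedge\alpha)-\eta\wedge(\d\alpha-\eta\wedge\alpha)\\
&= \d\d\alpha-\d\eta\wedge\alpha+\eta\wedge\d\alpha-\eta\wedge\d\alpha+\eta\wedge\eta\wedge\alpha .
\end{align*}
The graded Leibniz rule for the $1$-form $\eta$ gives $\d(\eta\wedge\alpha)=\d\eta\wedge\alpha-\eta\wedge\d\alpha$. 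With this, $\d\d\alpha=0$, $\eta\wedge\eta=0$ (since $\eta$ has odd degree), and cancellation of the two middle terms, the identity reduces to
$$\d_\eta\circ\d_\eta(\alpha)=-\,\d\eta\wedge\alpha\qquad\text{for all }\alpha\in\Omega^k(N).$$
The only place where care is needed is the sign in the Leibniz rule; it does not affect the conclusion in any case.

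Both implications then follow from this formula. If $\d\eta=0$, the right-hand side vanishes for every $\alpha$, so $\d_\eta\circ\d_\eta=0$.

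Conversely, suppose $\d_\eta\circ\d_\eta=0$ on all of $\Omega^*(N)$, or even only on $\Omega^0(N)$. Take $\alpha$ to be the constant function $1$; then $-\d\eta\wedge 1=-\d\eta=0$, so $\d\eta=0$.

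Note that in the definition preceding Lemma~\ref{lemma1} the operator $\d_\eta$ is introduced under the assumption that $\eta$ is closed. For the equivalence to have content, I will read $\d_\eta$ as defined by the same formula for an arbitrary $1$-form $\eta$, and then the computation above proves both directions. There is no real obstacle here: the statement is a short algebraic verification.
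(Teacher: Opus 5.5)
Your proof is correct and uses the same direct expansion as the paper. The difference is that you keep the $\d\eta$ term and obtain the identity $\d_\eta(\d_\eta\alpha)=-\d\eta\wedge\alpha$, whereas the paper substitutes $\d\eta=0$ midway and so only establishes $\d\eta=0\Rightarrow\d_\eta\circ\d_\eta=0$. Your evaluation at $\alpha=1$, together with your remark that $\d_\eta$ must be read as defined for non-closed $\eta$, supplies the converse that the paper's argument omits.
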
 

\begin{proof}
Let $\alpha \in \Omega^k(N)$. Then:
\begin{eqnarray*}
\d^2_\eta \alpha &=& \d_\eta \left(
\d\alpha - \eta \wedge \alpha
\right) = \d \left(
\d\alpha - \eta \wedge \alpha
\right) - \eta \wedge \left(
d\alpha - \eta \wedge \alpha
\right) 
\\
&=& \d^2 \alpha - \d (\eta \wedge \alpha) - \eta \wedge \d\alpha + \eta \wedge \left( \eta \wedge \alpha \right)\\
&=& \eta \wedge \d \alpha - \eta \wedge \d\alpha = 0,
\end{eqnarray*}
where in the second-last equality we used that $\d^2=0$ and $\d\eta=0$.
\end{proof}

\bigskip

A natural question is how this exterior differential depends on the choice of $\eta$ in its cohomology class. There is indeed a natural {\it gauge invariance}.\\

\begin{lemma}[Gauge invariance]\label{lemma2}
  Let $\alpha\in\Omega^k(N)$ and $f\in C^\infty(N)$. Then:
$$\d_{\eta+\d f}\alpha= e^f \d_\eta(e^{-f}\alpha).$$
In particular:
$$ \d_{\eta+df}\alpha=0 \Longleftrightarrow \d_\eta(e^{-f}\alpha)=0. \\$$
\end{lemma}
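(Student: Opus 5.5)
The identity is proved by expanding the right-hand side with the Leibniz rule for the ordinary exterior derivative and regrouping it into the form of the definition of $\d_{\eta+\d f}$. Fix $\alpha\in\Omega^k(N)$ and $f\in C^\infty(N)$. Since $e^{-f}$ is a function (a $0$-form), we have
$$
\d(e^{-f}\alpha) = \d(e^{-f})\wedge\alpha + e^{-f}\d\alpha = -e^{-f}\,\d f\wedge\alpha + e^{-f}\d\alpha .
$$
By the definition of $\d_\eta$,
$$
\d_\eta(e^{-f}\alpha) = \d(e^{-f}\alpha) - \eta\wedge(e^{-f}\alpha) = e^{-f}\bigl(\d\alpha - \d f\wedge\alpha - \eta\wedge\alpha\bigr),
$$
where we pull the scalar $e^{-f}$ out of the wedge product. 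Multiplying by $e^f$ gives $\d\alpha-(\eta+\d f)\wedge\alpha$. This is exactly $\d_{\eta+\d f}\alpha$ by the definition, applied to the $1$-form $\eta+\d f$.

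The definition of $\d_{\eta+\d f}$ should be legitimate in the same sense as $\d_\eta$, so we note that $\eta+\d f$ is again closed because $\d^2 f=0$. By Lemma \ref{lemma1}, $\d_{\eta+\d f}$ is then a genuine differential. This is not needed for the identity itself, only for its interpretation.

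For the ``in particular'' part, $e^f$ is nowhere vanishing, so multiplication by $e^f$ is injective on forms. Hence $\d_{\eta+\d f}\alpha=0$ if and only if $\d_\eta(e^{-f}\alpha)=0$.

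There is no real obstacle here. The only points needing care are the sign in $\d(e^{-f})=-e^{-f}\d f$ and the fact that $\d f\wedge\alpha$ has the same ordering as the $\eta\wedge\alpha$ term in the definition. Since $e^{-f}$ is a $0$-form, no graded sign arises when it moves past $\eta$.
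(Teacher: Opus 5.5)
Your proof is correct and matches the paper's proof. Both expand $\d_\eta(e^{-f}\alpha)$ with the Leibniz rule, collect the terms as $e^{-f}\bigl(\d\alpha-(\eta+\d f)\wedge\alpha\bigr)=e^{-f}\d_{\eta+\d f}\alpha$, and your extra remarks (closedness of $\eta+\d f$, and that $e^f$ is nowhere zero for the ``in particular'' clause) just make explicit what the paper leaves implicit.
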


\bigskip
\begin{proof}
\begin{eqnarray*}
\d_{\eta} \left(e^{-f} \alpha \right) &=&
\d\left(e^{-f} \alpha \right) - \eta \wedge \left(e^{-f} \alpha \right) =
e^{-f} \left(-\d f \wedge \alpha + \d\alpha - \eta \wedge \alpha \right)\\
&=&
e^{-f} \left(\d\alpha - (\eta+\d f) \wedge \alpha \right)\\
&=& e^{-f} \d_{\eta+\d f}\alpha.
\end{eqnarray*}
\end{proof}

\noindent The following formula will be useful later.\\

\begin{lemma}[Modified Cartan's formula] \label{lemma3}
  Let $X$ be a smooth vector field on $N$ and $\alpha \in \Omega^k(N)$. Then:
  \[
{\mathcal L}_X\alpha = \d_\eta (\iota_X\alpha) + \iota_X(\d_\eta\alpha) + (\iota_X\eta)\alpha,
  \]
 where $\iota_X$ denotes the interior product ({\it i.e.}, the contraction of the differential form by the vector field).
\end{lemma}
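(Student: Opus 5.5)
We reduce to the classical Cartan formula $\mathcal{L}_X\alpha = \d(\iota_X\alpha) + \iota_X(\d\alpha)$, valid for any smooth vector field $X$ and any $\alpha\in\Omega^k(N)$. The strategy is to expand both Lichnerowicz differentials on the right-hand side using the definition $\d_\eta\beta = \d\beta - \eta\wedge\beta$. We then check that the correction terms involving $\eta$ cancel exactly against $(\iota_X\eta)\alpha$. Note that closedness of $\eta$ plays no role here: the identity is purely algebraic once the classical Cartan formula is available.

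First we compute
\[
\d_\eta(\iota_X\alpha) = \d(\iota_X\alpha) - \eta\wedge\iota_X\alpha
\]
and
\[
\iota_X(\d_\eta\alpha) = \iota_X\d\alpha - \iota_X(\eta\wedge\alpha).
\]
The one genuine ingredient is that $\iota_X$ is a graded derivation of degree $-1$. Since $\eta$ is a $1$-form, this gives
\[
\iota_X(\eta\wedge\alpha) = (\iota_X\eta)\,\alpha - \eta\wedge\iota_X\alpha .
\]
Summing the two expressions, the terms $-\eta\wedge\iota_X\alpha$ and $+\eta\wedge\iota_X\alpha$ cancel. What remains is
\[
\d(\iota_X\alpha) + \iota_X\d\alpha - (\iota_X\eta)\,\alpha = \mathcal{L}_X\alpha - (\iota_X\eta)\,\alpha .
\]
Adding $(\iota_X\eta)\alpha$ to both sides yields the stated formula.

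The only point requiring care is the sign in the graded Leibniz rule for $\iota_X$. The sign $(-1)^{\deg\eta}=-1$ is exactly what makes the cross terms cancel rather than double. There is no real obstacle beyond this bookkeeping. As a sanity check, we can verify the case $k=0$: there $\iota_X\alpha=0$, and the formula reduces to $X(\alpha) = \iota_X(\d\alpha - \alpha\eta) + \alpha\,\eta(X)$, which holds.
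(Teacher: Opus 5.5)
Your proof is correct and is essentially the paper's argument. Both start from the classical Cartan formula, substitute $\d_\eta\beta = \d\beta - \eta\wedge\beta$, and use the graded Leibniz rule $\iota_X(\eta\wedge\alpha) = (\iota_X\eta)\alpha - \eta\wedge\iota_X\alpha$ to cancel the cross terms. The paper runs the same computation starting from $\mathcal{L}_X\alpha$ rather than from the right-hand side, and, as you noted, never uses $\d\eta = 0$.
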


\medskip

\begin{proof} Using Cartan's formula and the definition of $\d_\eta$, we obtain:
\begin{eqnarray*}
{\mathcal L}_X \alpha &=& \d \left(\iota_X \alpha\right ) + \iota_X \d\alpha \\
&=& 
\d_\eta \left(\iota_X \alpha\right ) + \eta\wedge \iota_X \alpha + \iota_X \left( \d_\eta \alpha + \eta \wedge \alpha \right) \\
&=& \d_\eta \left(\iota_X \alpha\right ) + 
\iota_X \left( \d_\eta \alpha \right)
+ (\iota_X\eta)\alpha.
 \end{eqnarray*}
\end{proof}

\bigskip

\begin{definition}[{\bf Locally Conformal symplectic manifolds}]
A {\it conformally symplectic manifold} is a triple $(N, \eta, \omega)$, where:
\begin{itemize}
\item $\eta \in \Omega^1(N)$ and it is $\d$-closed ({\it i.e.}, $\d\eta=0$),
\item $\omega \in \Omega^2(N)$ is non-degenerate and $\d_\eta$-closed ({\it i.e.}, $\d_\eta \omega=0$).\\
 \end{itemize}
 \end{definition}
 
 \medskip

\begin{remark}
{\bf (i)} Observe that the non-degeneracy of $\omega$ implies that the dimension of $N$ must be even.\\
{\bf (ii)} If $\eta=0$ this reduces to the definition of symplectic manifold. Whereas if $\eta= \d f$, this means that $e^{-f}\omega$ is a symplectic form on $N$ (see Lemma \ref{lemma2}).\\
{\bf (iii)} One could consider this conformally symplectic structure, modulo the equivalence relation $\eta \sim \eta + \d f$. \\
{\bf (iv)} 
 It has been proved by Lee \cite{Lee} that $(N,\eta,\omega)$ being locally conformally symplectic is equivalent to the existence of an atlas of charts $N=\bigcup U_i,\: \phi_i:U_i\rightarrow {\mathbb R}^{2n}$ such that the transition maps $\psi_{ij}=\phi_i\circ\phi_j^{-1}$ satisfy $\psi_{ij}^*(\d x\wedge \d p) = c_{ij}(\d x\wedge \d p)$ where $c_{ij}$ are positive local constants, or equivalently $\d(e^{\phi_i}\omega)=0$. Moreover, if $\eta =\d f$ is exact, then $\d(e^f\omega)=0$ and $(N,\eta=\d f,\omega)$ is said \emph{globally conformally symplectic}.\\
 
Geometrically, this can also be interpreted as a line bundle on $N$ and $\d_\eta$ is a flat connection on it (flatness follows from the condition $\d\eta=0$).\\
 \end{remark}
 
 \bigskip

 \begin{definition}[{\bf Conformally symplectic vector fields}]
Let $(N,\eta,\omega)$ be a conformally symplectic manifold and let $X$ be a vector field on $N$.
 We say that $X$ is {\it conformally symplectic} if there exists a smooth function $f: N\rightarrow{\mathbb R}$ such that
  \begin{equation} \label{eqcsgen}
\left\{  \begin{array}{l}
    {\mathcal L}_X\omega = f \omega \\
    {\mathcal L}_X\eta = \d f.
    \end{array}\right.\\
  \end{equation}
  \end{definition}

\medskip

\begin{remark}
{\bf (i)} Let us intuitively explain why we have two conditions. The first equation describes how the form $\omega$ is changed by the flow, exactly as in the case described in section \ref{sec1}. In particular, if we consider the push-forward by flow $\Phi_X^t$, we have: 
$
\omega \longmapsto e^{f(x)t} \omega. 
$
It follows from Lemma \ref{lemma2} that if we want the evolution of the symplectic form to remain ``closed'', we need to change the closed form $\eta$ accordingly, namely
 $\eta \longmapsto \eta+ t \d f$. Therefore, we get the second condition $ {\mathcal L}_X\eta = \d f$. Observe that these conditions ensure that we remain in the same equivalence class of conformally symplectic manifolds. \\
{\bf (ii)} if $\eta=0$ ({\it i.e.}, $\omega$ is already a symplectic form), it follows that $\d f=0$, hence $f$ must be constant. This explains why in the symplectic setting, one can only consider conformally symplectic vector fields with a constant conformal factor.\\
\end{remark}

\medskip

One can rewrite equations \eqref{eqcsgen} in an alternative form, which resembles the constant factor case, modulo changing $\d$ with $\d_\eta$.\\

\begin{lemma}
Let $X$ be a vector field on $N$. $X$ is conformally symplectic as in \eqref{eqcsgen} if and only if
  \begin{equation} \label{condcs}
  \d_\eta \iota_X\omega = c\,\omega \qquad {\rm where}\qquad c:= f- \iota_X\eta \in{\mathbb R}.
  \end{equation}
\end{lemma}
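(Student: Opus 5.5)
The plan is to reduce everything to the modified Cartan formula (Lemma~\ref{lemma3}), applied once to $\omega$ and once to $\eta$, using only that $\d_\eta\omega=0$, $\d\eta=0$ and that $N$ is connected.

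\emph{First equation.} Applying Lemma~\ref{lemma3} with $\alpha=\omega$ and using $\d_\eta\omega=0$ (part of the definition of a locally conformally symplectic manifold) gives
\[
{\mathcal L}_X\omega=\d_\eta(\iota_X\omega)+(\iota_X\eta)\,\omega .
\]
Hence ${\mathcal L}_X\omega=f\omega$ holds if and only if $\d_\eta\iota_X\omega=(f-\iota_X\eta)\,\omega$, for any smooth function $f$. At this stage $c=f-\iota_X\eta$ is only a smooth function on $N$.

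\emph{Second equation.} The ordinary Cartan formula together with $\d\eta=0$ gives ${\mathcal L}_X\eta=\d(\iota_X\eta)$. Hence ${\mathcal L}_X\eta=\d f$ holds if and only if $\d(f-\iota_X\eta)=0$, i.e.\ $\d c=0$. Since $N$ is connected, this is equivalent to $c$ being a real constant.

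\emph{Both directions.} Combining the two steps yields the equivalence.
\begin{itemize}
\item[($\Rightarrow$)] If $X$ satisfies \eqref{eqcsgen} with some $f$, then $c=f-\iota_X\eta$ is constant by the second step, and $\d_\eta\iota_X\omega=c\,\omega$ by the first.
\item[($\Leftarrow$)] Conversely, suppose $\d_\eta\iota_X\omega=c\,\omega$ for a constant $c\in\mathbb R$. Set $f:=c+\iota_X\eta$. The first step gives ${\mathcal L}_X\omega=f\omega$. Since $c$ is constant, ${\mathcal L}_X\eta=\d(\iota_X\eta)=\d f$. So $X$ is conformally symplectic with this $f$.
\end{itemize}

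\emph{Main delicate point.} The only thing needing care is what ``$c\in\mathbb R$'' means in \eqref{condcs}. In the converse direction I will take constancy of $c$ as part of the hypothesis.

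As a remark, constancy is in fact automatic when $\dim N\ge 4$, even if $c$ is only assumed to be a function. Applying $\d_\eta$ to $\d_\eta\iota_X\omega=c\,\omega$ and using Lemma~\ref{lemma1} and $\d_\eta\omega=0$ gives
\[
0=\d_\eta(c\,\omega)=\d c\wedge\omega+c\,\d_\eta\omega=\d c\wedge\omega .
\]
Wedging with a nondegenerate $2$-form is injective on $1$-forms in dimension $\ge4$ (pointwise Lefschetz), so $\d c=0$. In dimension $2$ this argument fails, and constancy must genuinely be assumed.
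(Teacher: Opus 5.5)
Your proof is correct and follows the paper's argument essentially step for step: Cartan's formula with $\d\eta=0$ turns ${\mathcal L}_X\eta=\d f$ into constancy of $c=f-\iota_X\eta$, and the modified Cartan formula with $\d_\eta\omega=0$ turns ${\mathcal L}_X\omega=f\omega$ into $\d_\eta\iota_X\omega=c\,\omega$. Two things you add are correct and go beyond the paper: you invoke connectedness of $N$ explicitly, and you observe that $\d c\wedge\omega=0$ already forces $c$ to be constant when $\dim N\ge 4$.
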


\medskip

\begin{proof}
  Using the modified Cartan's formula (Lemma \ref{lemma3}) and the fact that $\d\eta=0$, we obtain
  \begin{eqnarray*}
    {\mathcal L}_X\eta = \d f &\Longleftrightarrow& \d (\iota_X\eta) = \d f \\
    &\Longleftrightarrow& \d(f-\iota_X\eta) =0 \\
    &\Longleftrightarrow& f- \iota_X\eta =:c \in{\mathbb R}.
  \end{eqnarray*}
  Substituting in the first equation in \eqref{eqcsgen}:
  \begin{eqnarray*}
    {\mathcal L}_X\omega = f\, \omega &\Longleftrightarrow&
     \d_\eta (\iota_X\omega) + (\iota_X\eta)\omega = f\omega \\
    &\Longleftrightarrow& \d_\eta (\iota_X\omega) = (f- \iota_X\eta)\omega\\
    &\Longleftrightarrow& \d_\eta (\iota_X\omega) = c\,\omega.
  \end{eqnarray*}
\end{proof}

\bigskip

\subsubsection{Conformally Symplectic Cotangent Bundles} 

Let us now consider an interesting example of conformally symplectic manifolds. As in section \ref{sec1}, we consider $N=T^*M$, where
$M$ is a finite-dimensional compact and connected smooth Riemannian manifold. Let $\lambda$ be the Liouville (or tautological) form and choose
$\eta \in \Omega^1(T^*M)$ such that $\d \eta =0$. \\

\noindent We consider the {\it exact} conformally symplectic manifold given by $(T^*M, \eta, \omega_\eta)$, where $\omega_\eta:= -\d_\eta \lambda$. Clearly, $\omega_\eta$ is $\d_\eta$-closed (see Lemma \ref{lemma1}).\\

\noindent Let $X$ be a conformally symplectic vector field on $(T^*M, \eta, \omega_\eta)$. We would like to derive the equations that satisfy its flow, as already done in \eqref{eqmotion}.

In this case, condition (\ref{condcs}) reads
 \begin{equation}
 \d_\eta( \iota_X\omega + c\lambda) =0
 \end{equation}
 that is, the 1-form $\iota_X\omega -c\lambda$ is $\d_\eta$-closed.
 Let us start by considering the case in which this form is $\d_\eta$-exact; hence, there exists a smooth
function $H:T^*M\rightarrow{\mathbb R}$ such that
  \begin{align}
    \label{condhcs}
    \iota_X\omega + c\lambda =\d_\eta H. 
  \end{align}
The function $H$ will be called a Hamiltonian.\\
\medskip

We write down condition (\ref{condhcs}) in local coordinates $(x,p)\in T^*M$, so that $\lambda =p\, \d x$. Hence we also denote $X=(\dot{x},\dot{p})$ and $\eta = \bar{\eta}_x \cdot \d x + \bar{\eta}_p \cdot \d p$ where $\bar{\eta}_x(x,p) = (\bar{\eta}_{x,1}(x,p),\dots,\bar{\eta}_{x,n}(x,p))$, $\d x =(dx_1,\dots, dx_n)$ and analogously for $\bar{\eta}_p$ and $\d p$.

We have,
\begin{eqnarray}\label{oneside}
 \iota_X\omega + c\lambda & = & \iota_X(-\d_\eta l) + c\lambda = \iota_X(-\d\lambda + \eta\wedge\lambda ) + c\lambda \nonumber\\
 & =& -\dot{p} \cdot \d x +\dot{x}\cdot \d p + (\iota_X\eta) \lambda - (\iota_X\lambda) \eta + c\lambda \nonumber\\
 & =& -\dot{p} \cdot \d x +\dot{x}\cdot \d p + (f-c) \lambda - (p\cdot\dot{x}) \eta + c\lambda \nonumber\\
 & =& -\dot{p} \cdot \d x +\dot{x}\cdot \d p + f\, p \cdot \d x - (p\cdot\dot{x}) (\bar{\eta}_x \cdot \d x + \bar{\eta}_p \cdot \d p),
\end{eqnarray}
where we used that $\iota_X\eta = f-c$, as it follows from \eqref{condcs}.\\

\noindent On the other side, we have
\begin{equation}\label{theotherside}
\d_\eta H =\d H - H\eta = \partial_x H \cdot \d x + \partial_p H \cdot \d p - H(\bar{\eta}_x \cdot \d x + \bar{\eta}_p \cdot \d p). 
\end{equation}

Putting together \eqref{oneside} and \eqref{theotherside}, we get the following {equations of motion}:
\begin{equation}
  \label{complete_eq}
  \left\{
  \begin{array}{l}
    \dot{x} = \partial_p H + (p\cdot \dot{x} -H) \bar{\eta}_p
    \\
     \dot{p} = -\partial_x H + f \,p - (p\cdot \dot{x} -H) \bar{\eta}_x.
  \end{array}
  \right.
\end{equation}

\bigskip
\bigskip

\noindent {\bf Question:} {\it Is there a physical example that fits into the framework of equations \eqref{complete_eq}? What do equations \eqref{complete_eq} mean from a physical/mechanical point of view? }\\

\noindent Let us assume for simplicity that $\bar{\eta}_p=0$, namely $\eta$ is the pullback via the projection $\pi:T^*M\longrightarrow M$ of a closed form on $M$: $\eta = \bar{\eta}_x \cdot \d x.$ Then, equations (\ref{complete_eq}) become:

\begin{equation}
  \label{ham_eq}
  \left\{
  \begin{array}{l}
    \dot{x} = \partial_p H 
    \\
     \dot{p} = -\partial_x H + f \,p - (p\cdot \dot{x} -H) \bar{\eta}_x.
  \end{array}
  \right.
\end{equation}

\noindent In particular, if $H$ is a Tonelli Hamiltonian, then the term $(p\cdot \dot{x} -H)$ is exactly the Lagrangian associated to $H$; in fact, it follows from the first equation that
$\dot{x} = \partial_p H$. Hence, we could rewrite these equations as:

\begin{equation}
  \label{ham_eq2}
  \left\{
  \begin{array}{l}
    \dot{x} = \partial_p H 
    \\
     \dot{p} = -\partial_x H + f p - L(x,\dot{x}) \bar{\eta}_x.
  \end{array}
  \right.\\
\end{equation}

\medskip
\medskip

\noindent {\bf Question:} {\it The term $f p$ is what we were expecting (some dissipation/repulsion related to the momentum). However: what does the term $ - L(x,\dot{x}) \bar{\eta}_x$ represent? Does it have any interpretation?}

%% file: Contributions/Tabachnikov.tex


\section*{Serge Tabachnikov}

\paragraph{Projective billiards.} Projective billiards were defined in \cite{Ta97}. A projective billiard table is a convex domain with smooth boundary equipped with a transverse field of directions; the phase space is the same as that of the conventional billiard, and the reflection law is such that the tangential component of the incoming velocity vector is preserved, whereas the transverse component is reversed. 

Problem: {\it when does a projective billiard possess an invariant symplectic form?} And, related, {\it when does it have a variational description?}

One such situation is described in \cite{Ta97-1}: the billiard table is bounded by a sphere, and the transverse field satisfies a certain {\it exactness} condition. The invariant symplectic form on oriented lines that intersect the sphere is induced by the hyperbolic metric in its interior (the projective, or Beltrami-Cayley-Klein model).
Another example, in dimension 2, is when the billiard curve is an arbitrary oval, but the lines of the transverse field are all concurrent; the invariant area form comes from the standard area form in the plane via polar duality (see \cite{Ta97}).

In recent papers \cite{Glu,GM}, a somewhat related question was studied: when is a Minkowski billiard projective? (In which case it has a variational description and an invariant symplectic form). The answer is that such a billiard is the conventional one in some Euclidean metric. 

\paragraph{Inner and outer periodic orbits on immersed submanifolds.} Let $M$ be a closed immersed submanifold of Euclidean space. A generalized $n$-periodic billiard trajectory is an $n$-gon inscribed in $M$ (that is, the vertices lie on $M$) that has an extremal perimeter. 

Problem: {\it estimate from below the number of $n$-periodic trajectories in terms of the topology of $M$ and the immersion}. 

The case of $n=2$ was studied in \cite{Pu}, where the answer is given in terms of the topology of $M$ (but not of the immersion). For example, for a 2-dimensional torus, the number of such {\it diameters} is not less than 10, and this estimate is sharp. For higher periods, see \cite{Du1,Du2}.

A similar question can be asked about periodic trajectories of the outer symplectic billiard about $M$; see \cite{ACT} for their recent study.

\paragraph{Complexity of polygonal symplectic billiards.} Symplectic billiards were introduced in \cite{AT}; the polygonal ones were studied in \cite{ABSST,ALW}. 

Label the sides of a polygon by some symbols and assign to a billiard trajectory its symbolic code, the corresponding bi-infinite word in these symbols. The set of all trajectories yields a collection of such words. The complexity function $p(n)$ is the number of different subwords of length $n$ in this collection. 

For the conventional polygonal billiards, it is known that the complexity function is sub-exponential (the literature on this subject is huge, starting with \cite{Ka}), and it has polynomial, actually cubic, growth for rational billiards (whose angles are rational multiples of $\pi$). For polygonal outer billiards, the growth of the complexity function is polynomial, see \cite{GT}.

Problem: {\it study the complexity of polygonal symplectic billiards}. 

Note that there are examples of such billiards with all trajectories being periodic; in such cases, the complexity function has a zero growth rate.

\paragraph{Chaotic symplectic billiards?} Problem: {\it do chaotic symplectic billiards in convex $C^1$-smooth domains exist?}

A natural candidate is the stadium; there is numerical evidence that this might be the case.

\paragraph{Planar characteristic curves.} Let $M$ be (a germ of) a smooth hypersurface in the standard symplectic space. 

Problem: {\it how restrictive is the condition that the characteristic curves on $M$ are planar?}

If $M$ is closed and convex, the situation is rigid: $M$ is an affine symplectic image of a round ball; see \cite{KS}.

\paragraph{Density of reflected wave fronts.} Consider a domain with smooth boundary in Euclidean space and let $P$ be a generic point in its interior. Let $P$ be a source of light, and consider the time $t$-front, that is, the locus of ``photons" emanating from $P$ in all directions, undergoing optical reflection off the boundary, and traveling for time $t$. 

Problem: {\it does this time $t$-front become dense in the limit $t \to \infty$?} 

This problem, and its analog for smooth Riemannian manifolds, is discussed in \cite{KK}. Even the case of a round circle is open. The length growth of wave fronts is studied in \cite{dV,dV2}.

\paragraph{Ivrii's conjecture for outer symplectic billiards.} The Ivrii conjecture for conventional billiards asserts that the set of periodic orbits has zero measure (with respect to the measure associated with the invariant symplectic structure of the phase space). A similar conjecture applies to the outer symplectic billiards.

For conventional billiards, this conjecture is proved for periods $n=2,3$ in all dimensions, and for $n=4$ in dimension 2.
For planar outer billiards, there are proofs for $2n+1$-periodic orbits with rotation number $n$ and for $2n$-periodic orbits with rotation number $n-1$.  See \cite{Sh} and the references therein about these results.

Problem: {\it prove Ivrii's conjecture for 3-periodic multidimensional outer symplectic billiards with respect to a strictly convex closed hypersurface.}

\paragraph{Approximating a convex curve by polygons with respect to the symmetric difference.} A popular topic in convex geometry is the approximation of smooth convex closed curves by polygons, see \cite{Gr}. The quality of approximation by $n$-gons may be measured by the difference of perimeters or areas, and the polygons may be inscribed or circumscribed. This leads to four functions of $n$, and these four functions correspond to the minimal actions of four billiards: the conventional, Birkhoff, billiards, the outer billiards, the symplectic billiards, and the outer length billiards (see \cite{AT24,BBF} about these lesser studied billiards). 

Consider the approximation of an oval by $n$-gons where the quality of approximation is measured by the area of the symmetric difference. 

Problem: {\it is the resulting function of $n$ related to some billiard-like dynamical system?}

\paragraph{Optical transformations of the space of oriented lines.} The billiard (optical) reflection is a (local) symplectic transformation of the space of oriented lines (rays of light); this space carries a symplectic structure induced by the canonical symplectic structure of the cotangent bundle of the ambient  $n$-dimensional space via symplectic reduction.

The space of oriented lines has dimension $2n-2$, and its symplectic transformation depends on a smooth function of $2n-2$ variables (for example, an infinitesimal symplectic transformation, i.e., a Hamiltonian vector field, is defined by its Hamiltonian function).

On the other hand, a smooth mirror is a hypersurface, locally the graph of a function of $n-1$ variables. Such functions constitute a minuscule part of the space of smooth functions of $2n-2$ variables. Thus, one expects the {\it optical} set of (local) symplectic transformations of the space of rays that are realized by a finite collection of mirrors to be a very small part of the set of all (local) symplectic transformations of the space of rays.

Problem: {\it how to characterize this optical symplectic transformation within the space of all symplectic transformations of the space of oriented lines?}

See \cite{PTT,Pe,Ta20,Gl1} for some results in this direction. 